\newcommand{\formal}[1]{\ensuremath{\textsf{#1}}}
\newcommand{\inside}{\ensuremath{\formal{in}}}
\newcommand{\outside}{\ensuremath{\formal{out}}}
\newcommand{\Left}{\ensuremath{\formal{left}}}
\newcommand{\Right}{\ensuremath{\formal{right}}}
\newcommand{\NP}{$\mathsf{NP}$}
\renewcommand{\P}{$\mathsf{P}$}
\newcommand{\WOne}{$\mathsf{W[1]}$}
\newcommand{\tw}{{\mathrm{tw}}}
\newcommand{\adj}{{\mathrm{adj}}}
\newcommand{\Integer}{\mathbb Z}
\newcommand{\Real}{\mathbb R}
\newtheorem{Theorem}{Theorem}
\newtheorem{Lemma}{Lemma}
\newtheorem{Corollary}{Corollary}
\theoremstyle{definition} \newtheorem{Definition}{Definition}
\theoremstyle{definition} \newtheorem{Remark}{Remark}
\theoremstyle{definition} \newtheorem{Algorithm}{Algorithm}
\theoremstyle{definition} 
\theoremstyle{definition} 
\newenvironment{Proof}{\begin{trivlist} \item[] {\bf Proof.}}{\hfill $\Box$\end{trivlist}}
\newcommand{\bigO}{\CMcal{O}}
\newcommand{\xshape}{\boxtimes}
\newcommand{\sF}{\mathcal{F}}
\newcommand{\sL}{\mathcal{L}}
\newcommand{\sP}{\mathcal{P}}
\newcommand{\sR}{\mathcal{R}}
\newcommand{\sT}{\mathcal{T}}
\newcommand{\cmP}{\CMcal{P}}
\newcommand{\ds}{\displaystyle}
\renewcommand{\geq}{\geqslant}
\renewcommand{\leq}{\leqslant}
\title{The {Firebreak} Problem}
\author{Kathleen D.~Barnetson\thanks{Department of Mathematics and Statistics, Memorial University of Newfoundland, St.~John's, NL, Canada.},
Andrea C.~Burgess\thanks{Department of Mathematics and Statistics, University of New Brunswick, Saint John, NB, Canada.  {\sl andrea.burgess@unb.ca}},
Jessica Enright\thanks{School of Computing Science, University of Glasgow, Glasgow, Scotland. {\sl Jessica.Enright@glasgow.ac.uk}},
\\
Jared Howell\thanks{School of Science and the Environment, Grenfell Campus, Memorial University of Newfoundland, Corner Brook, NL, Canada.  {\sl jahowell@grenfell.mun.ca}},
David A.~Pike\thanks{Department of Mathematics and Statistics, Memorial University of Newfoundland, St.~John's, NL, Canada.  {\sl dapike@mun.ca}},
Brady Ryan\thanks{Department of Mathematics and Statistics, Memorial University of Newfoundland, St.~John's, NL, Canada.}}
\begin{document}

\date{\today}

\maketitle

\begin{abstract}
Suppose we have a network that is represented by a graph $G$.  Potentially a fire (or other type of contagion) might erupt at some vertex of $G$.
We are able to respond to this outbreak by establishing a firebreak at $k$ other vertices of $G$, so that the fire cannot pass through these
fortified vertices.  The question that now arises is which $k$ vertices will result in the greatest number of vertices being saved from the fire,
assuming that the fire will spread to every vertex that is not fully behind the $k$ vertices of the firebreak.
This is the essence of the {\sc Firebreak} decision problem, which is the focus of this paper.
We establish that the problem is intractable on the class of split graphs as well as on the class of bipartite graphs,
but can be solved in linear time when restricted to graphs having constant-bounded treewidth,
or in polynomial time when restricted to intersection graphs.
We also consider some closely related problems.
\end{abstract}

\vspace*{\baselineskip}
\noindent
Key words: firebreak, separation, connectivity, computational complexity

\vspace*{\baselineskip}
\noindent
AMS subject classifications: 05C85, 05C40, 68Q25

\vfill\pagebreak
\section{Introduction}

In this paper we consider the {\sc Firebreak} decision problem, which asks whether it is possible to
establish a firebreak of a given size in a network represented by a graph and thereby protect a desired number of other vertices
from being reached by a fire that breaks out at a specified vertex of the graph.
The problem is formally stated as follows:

\begin{tabbing}
\hspace*{9mm}\=
{\textsc{Firebreak}}\\
\>\hspace*{5mm}\=
{Instance}: A graph $G$, an integer $k$, an integer $t$, and a vertex $v_f \in V(G)$.\\
\>\>
{Question}: \= Does $V(G)$ contain a $k$-subset $S$ such that $v_f \notin S$ and the number
\\
\>\>\>
of vertices of $G-S$ that are separated from $v_f$ is at least $t$?
\end{tabbing}

There are similarities between this problem and the well-known {\sc Firefighting} problem,
which itself takes the form of a game with two players (fire and firefighters).  The game begins
with fire starting at a vertex.  Thereafter, in each round of the game each firefighter is able to
designate one unburnt vertex as a permanent firebreak, and then the fire spreads from each
of its vertices to all of their unprotected neighbours.  The game concludes when the fire can spread no more
(which, in the case of infinite graphs, may result in a game that never terminates).
Depending on context, the goal of the firefighters may be to minimise the number of vertices that are scorched,
or to minimise the time in which the fire is contained.
The {\sc Firefighting} problem was introduced in 1995 by Hartnell~\cite{Hartnell1995} and has since attracted considerable attention.
For a survey of results and open questions, see~\cite{FM2009}.

The {\sc Firebreak} problem could be viewed as variant of the {\sc Firefighting} problem in which the
firefighters are active for only the initial round of the game, after which the fire spreads without further intervention.
The particular nature and formulation of the {\sc Firebreak} problem lends itself to several applications of practical interest.
Although we model the problem in terms of fire, it readily applies to the spread of any contagion from a point of infection in a network
and where a one-time response is able to be deployed in immediate reaction to the outbreak.

In the course of our investigation into the {\sc Firebreak} problem, we noted that it is also closely related
to what we will call the {\sc Key Player} decision problem that pertains to the number of connected components that can be
created by the removal of a set of vertices.
By defining $c(G)$ to be the number of connected components of a graph $G$
and $G-S$ to be the subgraph of $G$ that is induced by the vertices of $V(G) \setminus S$,
the {\sc Key Player} problem can be formally described as follows:


\begin{tabbing}
\hspace*{9mm}\=
{\textsc{Key Player}}\\
\>\hspace*{5mm}\=
{Instance}: A graph $G$, an integer $k$ and an integer $t$.\\
\>\>
{Question}: Does $V(G)$ contain a $k$-subset $S$ such that $c(G-S) \geq t$?
\end{tabbing}

The {\sc Key Player} problem also models various real-world scenarios and applications of practical interest in networks.
If we have the means to inoculate $k$ nodes which then become impenetrable to the contagion, we can ask which $k$ nodes to inoculate
in order to create the greatest number of segregated quarantine zones.
As another scenario, $G$ might represent a communications network that we wish to disrupt by selectively disabling $k$ of its nodes,
with the goal being to maximise the number of subnetworks that become unable to communicate with other subnetworks.

A related problem is one of dissemination rather than separation, whereby instead of selecting a $k$-set $S$ so that $c(G-S)$ is maximised,
we wish to selected a $k$-set $S$ that can most efficiently reach the other vertices of $G$.
One such scenario could be the spread of news:  we wish to directly inform $k$ individuals, who then propagate the news to their neighbours, so that
the news spreads to everybody as quickly as possible.
Instead of forwarding information, we might care about influence and opinion (such as might be the case in a marketing campaign,
whereby $k$ individuals are selected to receive a new commercial product in the hope that they will exert influence among their friends to
acquire the product).
This dissemination problem and its related separation problem appear to have been first jointly described by Borgatti in 2002~\cite{Borgatti2002}.
In~\cite{Borgatti2006} he refers to the dissemination scenario as the ``Key Player Problem / Positive'' and uses the phrase
``Key Player Problem / Negative'' for the problem involving the deletion of $k$ nodes.

This ``negative'' variant corresponds to our interest.
Some of the early papers about this problem in the literature attest to its applicability to network tolerance
and robustness~\cite{Ballester2006,OAH2008,Sathik2009}).
More recent results have considered it from a computational complexity perspective,
establishing that it is {\NP}-hard for various classes of graphs
yet being solvable in polynomial time for graphs having bounded treewidth~\cite{Addis2013,BGZ2014,Marx2006,ShenSmith2012}.
A recent survey by Lalou {\em et al.}\ on the topic of detecting critical nodes in networks also touches on this problem~\cite{Lalou2018}.
%
%
%
Incidentally, our literature search revealed that there is also
an edge-based version of the problem; although the edge version is beyond the scope of the present paper,
we nevertheless provide a few references for the interested reader (see~\cite{Baruah2012,KT1986,ProvanBall1983}).

In this paper we concentrate on the {\sc Firebreak} decision problem, while also presenting some new results about the {\sc Key Player} problem.
In Section~\ref{Sec-Intractability} we establish that both problems are {\NP}-complete when restricted to the class of split graphs.
We also find that the {\sc Firebreak} problem is {\NP}-complete when restricted to the class of bipartite graphs.
Although the {\sc Key Player} problem is {\NP}-complete (and it remains so for planar cubic graphs),
in the situation where the number of vertices to be removed from the graph coincides with the graph's connectivity
the problem is found to be solvable in polynomial time.
In Section~\ref{Sec-BoundedTreewidth} we consider graphs having treewidth that is bounded by a constant
and for such graphs we show that the {\sc Firebreak} problem can be solved in linear time,
and in Section~\ref{Sec-IntersectionGraphs} we show that it can be solved in polynomial time for some classes of intersection graphs.

Before continuing, we establish some basic notation and terminology.
For a graph $G=(V(G),E(G))$,
we let $\deg_G(u)$
(or just $\deg(u)$ if the graph $G$ is unambiguously implicit)
denote the degree of vertex $u$, and we use the notation $u \sim v$ to indicate that vertex $u$ is adjacent to vertex $v$.
The maximum degree of $G$ is $\Delta(G) = \max\{\deg(u) : u \in V(G)\}$.
By $N_G(v)$ (or just $N(v)$ if there is no ambiguity)
we denote the open neighbourhood of a vertex $v$, so that $N(v) = \{ u \in V(G) : u \sim v \}$;
the closed neighbourhood $N(v) \cup \{v\}$ will be denoted by $N[v]$.
The {\em order} of a graph $G$ is the cardinality $|V(G)|$ of its vertex set,
and its {\em size} is the cardinality $|E(G)|$ of its edge set.
If $A$ is a subset of $V(G)$ then the subgraph of $G$ induced by $A$, denoted $G[A]$, is the graph with vertex set $A$
and edge set $E(G) \cap \big\{ \{u,v\} : u,v \in A \big\}$.
Throughout this paper we limit ourselves to finite undirected graphs without loops and without parallel edges,
and so it follows that for the size of a graph $G$ on $n$ vertices we have $|E(G)| \leq \binom{n}{2} \in \bigO (n^2)$.

For an instance $(G,k,t,v_f)$ of the {\sc Firebreak} problem we define $\sF(G, k, v_f)$
as the maximum number of vertices of $G - S$ that are not in the same connected component as $v_f$,
where this maximum is taken over all choices for $k$-subsets $S \subseteq V(G) \setminus \{v_f\}$.
Any $k$-subset $S \subseteq V(G) \setminus \{v_f\}$ that separates $\sF(G, k, v_f)$ vertices from $v_f$ will be called an {\em optimal} set.

\section{Intractability}
\label{Sec-Intractability}

Clearly the {\sc Firebreak} and {\sc Key Player} decision problems are in {\NP}
since any given $k$-set $S$ can be easily validated to determine whether $S$ separates at least $t$ vertices from $v_f$
(for the {\sc Firebreak} problem)
or $c(G-S) \geq t$ (for the {\sc Key Player} problem).
In this section we show that these two decision problems are both {\NP}-complete.

A {\em split} graph is any graph $G$ that admits a vertex partition $V(G)=A \cup B$ such that $A \cap B = \emptyset$,
$G[A]$ is a maximum clique, and $B$ is an independent set ({\em i.e.}, $B$ is a set of pairwise non-adjacent vertices).
To prove that the {\sc Firebreak} and {\sc Key Player} decision problems are {\NP}-complete,
we will show that they are {\NP}-complete even when the input graph $G$ is restricted to the class of split graphs.
We first note that under certain conditions the problems can be easily solved.

\begin{Lemma} \label{FBP-easy}
The {\sc Firebreak} problem
can be solved in linear time when $|N(v_f)| \leq k$.
\end{Lemma}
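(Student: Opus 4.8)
The plan is to show that the hypothesis $|N(v_f)| \leq k$ pins down the value $\sF(G,k,v_f)$ exactly, so that the decision problem collapses to a single arithmetic comparison. The key observation is that when we have at least as many firebreak vertices as $v_f$ has neighbours, we can simply surround $v_f$, which is the best any firebreak could ever hope to do.

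First I would record a universal upper bound that holds for \emph{every} instance, independent of the hypothesis. For any $k$-subset $S \subseteq V(G) \setminus \{v_f\}$, the graph $G-S$ has exactly $|V(G)| - k$ vertices, and one of these is $v_f$ itself. Since $v_f$ is never separated from itself, at most $|V(G)| - k - 1$ vertices of $G-S$ can lie in a component distinct from that of $v_f$. Hence $\sF(G,k,v_f) \leq |V(G)| - k - 1$ in all cases.

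Next I would show that $|N(v_f)| \leq k$ makes this bound tight. Because $|N(v_f)| \leq k$, we may choose $S$ to contain all of $N(v_f)$ and then enlarge $S$ to cardinality exactly $k$ using any other vertices drawn from $V(G) \setminus N[v_f]$; there are enough such vertices precisely when $k \leq |V(G)| - 1$, which is exactly the regime in which a valid firebreak exists at all. Once every neighbour of $v_f$ has been removed, $v_f$ is isolated in $G-S$, so each of the remaining $|V(G)| - k - 1$ vertices of $G-S$ lies in a component not containing $v_f$. This choice of $S$ therefore separates exactly $|V(G)| - k - 1$ vertices, meeting the upper bound.

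Combining the two steps gives $\sF(G,k,v_f) = |V(G)| - k - 1$, so the instance $(G,k,t,v_f)$ is a \textbf{yes}-instance if and only if $t \leq |V(G)| - k - 1$. Computing $|V(G)|$ and verifying the hypothesis $|N(v_f)| \leq k$ each take linear time, and the concluding comparison takes constant time, so the whole procedure runs in linear time. There is no genuine obstacle in this argument; the only point demanding a moment's care is the padding step, where one must confirm that $V(G) \setminus N[v_f]$ supplies enough vertices to bring $|S|$ up to $k$, which it does exactly when $k \leq |V(G)| - 1$.
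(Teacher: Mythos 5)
Your proposal is correct and follows essentially the same route as the paper: take $S$ to be $N(v_f)$ padded with arbitrary vertices from $V(G)\setminus N[v_f]$, observe that $v_f$ is then isolated so exactly $|V(G)|-k-1$ vertices are separated, and reduce the question to the comparison $|V(G)|-k-1\geq t$. Your explicit statement of the universal upper bound and the remark about having enough padding vertices when $k\leq|V(G)|-1$ are minor additions of care that the paper leaves implicit.
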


\begin{Proof}
Let $(G,k,t,v_f)$ be an instance of the {\sc Firebreak} problem where $G$ is a
graph and $|N(v_f)| \leq k$.
We now choose $S$ to consist of every neighbour of $v_f$ plus any choice of $k - |N(v_f)|$ additional
vertices selected from $V(G) \setminus N[v_f]$.
Clearly no vertex in $V(G) \setminus (S \cup \{v_f\})$ will be in the same connected component of $G-S$ as $v_f$.
Since there are $|V(G)| - k - 1$ vertices in $G-(S \cup \{v_f\})$, the problem is equivalent to asking if $|V(G)| - k - 1 \geq t$.
Hence one simply needs to count the vertices in $G$, which can be done in linear time.
\end{Proof}

\begin{Lemma} \label{KPP-easy}
The {\sc Key Player} problem on a split graph $G$ can be solved in linear time when $k$ is at least the size $\omega(G)$ of a maximum clique in $G$.
\end{Lemma}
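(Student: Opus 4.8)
The plan is to show that when $k \geq \omega(G)$ the maximum value of $c(G-S)$ taken over all $k$-subsets $S \subseteq V(G)$ is simply $|V(G)| - k$, so that the decision reduces to the single comparison $|V(G)| - k \geq t$. First I would fix a split partition $V(G) = A \cup B$ with $G[A]$ a maximum clique (so that $|A| = \omega(G)$) and $B$ an independent set, and write $n = |V(G)|$.

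The upper bound is immediate and uses nothing about split graphs: for any $k$-subset $S$ the graph $G-S$ has exactly $n-k$ vertices, and a graph on $n-k$ vertices has at most $n-k$ connected components, whence $c(G-S) \leq n-k$ for every choice of $S$.

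For the matching lower bound I would exploit the split structure. Because $B$ is an independent set, every edge of $G$ has at least one endpoint in $A$, and consequently deleting all of $A$ leaves an edgeless graph. Since $k \geq \omega(G) = |A|$, I can take $S$ to consist of all of $A$ together with any $k - |A|$ further vertices chosen from $B$ (and in the degenerate case $k = n$ simply take $S = V(G)$). For such an $S$ the graph $G-S$ has no edges, so each of its $n-k$ remaining vertices is its own component and $c(G-S) = n-k$.

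Combining the two bounds gives $\max_S c(G-S) = n-k$, so the instance is a yes-instance precisely when $n - k \geq t$. Evaluating this requires only counting the vertices of $G$, which can be done in linear time (and a split partition, where one is needed, can likewise be produced in linear time). I do not expect a genuine obstacle here: the only point requiring care is the observation that the hypothesis $k \geq \omega(G)$ is exactly what makes an optimal set with $S \supseteq A$ available, and it is this that turns the trivial vertex-count upper bound into an equality.
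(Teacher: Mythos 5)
Your proposal is correct and follows essentially the same route as the paper: delete the maximum-clique side $A$ of the split partition plus $k-|A|$ further vertices, leaving an edgeless graph on $n-k$ vertices, so the answer is affirmative iff $n-k\geq t$, which is checked by counting vertices in linear time. Your version is in fact slightly more complete than the paper's, since you explicitly justify optimality via the trivial upper bound $c(G-S)\leq n-k$, which the paper leaves as an unproved ``observe that'' remark.
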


\begin{Proof}
Let $G$ be a split graph with $V(G) = A \cup B$, where $A$ induces a maximum clique and $B$ is an independent set,
and let $(G, k, t)$ constitute an instance of the {\sc Key Player} problem.
Observe that the maximum number of connected components will be produced by deleting $A$ and $k - |A|$ of the $|V(G)| - |A|$ vertices of $B$.
Thus the given instance of the {\sc Key Player} problem has an affirmative answer if and only if $|V(G)| - k \geq t$.
To solve this, one simply needs to count the vertices of $G$. This can clearly be done in linear time.
\end{Proof}

With the next result we show that when restricted to split graphs,
the {\sc Key Player} problem is no more difficult than the {\sc Firebreak} problem.

\begin{Lemma} \label{FBP-solves-KPP}
The {\sc Key Player} decision problem on split graphs can be solved in polynomial time with an oracle for the {\sc Firebreak} decision problem on split graphs.
\end{Lemma}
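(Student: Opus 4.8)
The plan is to give a Turing reduction that, after polynomial preprocessing, decides the {\sc Key Player} instance with a single call to the {\sc Firebreak} oracle. Given a {\sc Key Player} instance $(G,k,t)$ on a split graph $G$, I first compute a split partition $V(G)=A\cup B$ (polynomial time, with $G[A]$ a maximum clique so that $|A|=\omega(G)$) and then branch on the size of $k$.

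First I would dispose of the easy regime. If $k\geq\omega(G)=|A|$, then Lemma~\ref{KPP-easy} already decides the instance in linear time, so no oracle call is needed. The substantive case is $k<|A|$, and here the key observation is that deleting any $k$-set cannot exhaust the clique: since $|S|=k<|A|$ we always have $A\setminus S\neq\emptyset$, so $G-S$ has exactly one component meeting $A$, together with some isolated vertices of $B$. Because $B$ is independent, every $b\in B$ has $N(b)\subseteq A$, and such a $b\in B\setminus S$ is isolated in $G-S$ precisely when $N(b)\subseteq S$. Hence for every $k$-subset $S$ we obtain the clean identity
\[
c(G-S)=1+\bigl|\{\,b\in B\setminus S : N(b)\subseteq S\,\}\bigr|.
\]

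Next I would build the {\sc Firebreak} instance. Form $G'$ from $G$ by adjoining one new vertex $v_f$ adjacent to every vertex of $A$ and to nothing in $B$. Since $A$ is a maximum clique of $G$, no vertex of $B$ is adjacent to all of $A$, so $A\cup\{v_f\}$ is a maximum clique of $G'$ and $B$ remains independent; thus $G'$ is again split and the oracle is legitimately applicable. In $G'$ the neighbourhood of $v_f$ is exactly $A$, so for any $S\subseteq V(G)$ with $|S|=k<|A|$ the vertices of $G'-S$ separated from $v_f$ are precisely those $b\in B\setminus S$ with $N(b)\subseteq S$, the same isolated-$B$ vertices as above. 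Comparing with the displayed identity, the number separated from $v_f$ equals $c(G-S)-1$ for every such $S$, whence $\sF(G',k,v_f)=\bigl(\max_S c(G-S)\bigr)-1$; and since $k<|A|$ this maximum is the genuine {\sc Key Player} optimum. Consequently $(G,k,t)$ is a yes-instance of {\sc Key Player} if and only if $(G',k,t-1,v_f)$ is a yes-instance of {\sc Firebreak}, which I decide with one oracle call (treating $t\leq 1$ as a trivial affirmative, since then $c(G-S)\geq 1$ for the nonempty $G-S$).

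The construction is linear and only one oracle query is made, so the whole procedure runs in polynomial time. The step demanding the most care is the structural analysis of the deletion: verifying that, once $k<\omega(G)$ forces a surviving clique vertex, the component count and the separated-vertex count differ by exactly the single clique component \emph{for every $k$-set simultaneously} (not merely for optimal ones), and confirming that attaching $v_f$ to $A$ keeps $A\cup\{v_f\}$ a maximum clique, so that the oracle — promised only for split graphs — is correctly invoked.
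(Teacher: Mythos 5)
Your proposal is correct and follows essentially the same route as the paper: the same construction (adjoin $v_f$ adjacent to all of $A$, keep $k$, set the threshold to $t-1$) and the same structural observation that when $k<|A|$ the surviving clique forms one component while the separated vertices are exactly the isolated vertices of $B$. Your version is slightly more systematic in proving the identity $c(G-S)=1+|\{b\in B\setminus S: N(b)\subseteq S\}|$ for every $k$-set and in verifying that $G'$ remains split, but the argument is the same.
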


\begin{Proof}
Let $(G_1,k_1,t_1)$ constitute an instance of the {\sc Key Player} problem,
where $G_1=(A \cup B, E)$ is a split graph with a maximum clique on $A$ and independent set $B$.
Without loss of generality we may assume that $k_1 < |A|$ for otherwise the problem is easily solved by Lemma~\ref{KPP-easy}.

We proceed to formulate an instance $(G_2,k_2,t_2,v_f)$ of the {\sc Firebreak} problem as follows. Construct $G_2$ from $G_1$ by adding a new vertex named $v_f$ and adding an edge $\{u,v_f\}$ for each $u \in A$.
Let $k_2=k_1$ and $t_2=t_1-1$.

Now suppose there exists a $k_1$-subset $S_1$ of $V(G_1)$ such that $c(G_1-S_1) \geq t_1$. Since $G_1$ is a split graph, this means $G_1-S_1$ must have at least $t_2 = t_1 - 1$ isolated vertices, none of which are in $A$. Thus we let $S_2 = S_1$. Then the $t_2$ isolated vertices of $G_1-S_1$ are also isolated vertices of $G_2-S_2$. As none of these vertices are members of the clique induced by $A$, none of them are in the same connected component as $v_f$.
Thus $S_2$ is a $k_2$-subset of $V(G_2)$ such that $v_f \notin S_2$ and there are at least $t_2$ vertices in $G_2-S_2$ not in the same component as $v_f$.
Hence an affirmative answer to the {\sc Key Player} problem implies an affirmative answer to the associated {\sc Firebreak} problem $(G_2,k_2,t_2,v_f)$.

Conversely, suppose that the {\sc Firebreak} problem $(G_2,k_2,t_2,v_f)$ has an affirmative answer,
namely
a $k_2$-subset $S_2$ of $V(G_2) \setminus \{v_f\}$
such that there are at least $t_2$ vertices of $G_2-S_2$ that are not in the same connected component as $v_f$.
Since $v_f$ is part of the clique of $G_2$, these $t_2$ vertices must be isolated vertices in $G_2-S_2$. Let $S_1=S_2$. Then by the construction of $G_2$, these same $t_2$ vertices are also isolated in $G_1-S_1$ and so they form $t_2$ distinct components in $G_1-S_1$. Since by assumption $k_2 < |N(v_f)|$, then there must be at least one other vertex remaining in the clique with $v_f$.
This vertex also remains in the clique of $G_1$, so $c(G_1-S_1) = t_2 + 1 = t_1$
and hence the {\sc Key Player} problem has an affirmative solution.

Since the instance $(G_1,k_1,t_1)$ of the {\sc Key Player} problem has an affirmative answer if and only if
the associated instance $(G_2,k_2,t_2,v_f)$ of the {\sc Firebreak} problem has an affirmative answer,
and this associated instance can clearly be constructed in polynomial time,
then the {\sc Key Player} problem can be solved in polynomial time with the availability of an oracle for the {\sc Firebreak} problem.
\end{Proof}

We now proceed to show that the {\sc Firebreak} and {\sc Key Player} decision problems are both {\NP}-complete,
even when restricted to the class of split graphs.
To do so we will refer to the $t$-{\sc Way Vertex Cut} problem studied by Berger {\em et al.}~\cite{BGZ2014},
expressed as a decision problem as follows:

\begin{tabbing}
\hspace*{9mm}\=
{$t$-\textsc{Way Vertex Cut}}\\
\>\hspace*{5mm}\=
{Instance}: A graph $G$, an integer $k$ and an integer $t$.\\
\>\>
{Question}: Does $V(G)$ contain a subset $S$ such that $|S| \leq k$ and $c(G-S) \geq t$?
\end{tabbing}

\begin{Theorem}
\label{Thm-Intractable}
When restricted to split graphs,
the {\sc Firebreak} and {\sc Key Player} decision problems are both {\NP}-complete.
\end{Theorem}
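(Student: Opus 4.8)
The plan is to first show that the {\sc Key Player} problem is {\NP}-complete on split graphs, and then to deduce the same for {\sc Firebreak} using Lemma~\ref{FBP-solves-KPP}. Membership in {\NP} for both problems has already been noted, so only {\NP}-hardness requires work. I would reduce from the $t$-{\sc Way Vertex Cut} problem, which is known to be {\NP}-complete~\cite{BGZ2014}. By Lemmas~\ref{KPP-easy} and~\ref{FBP-easy}, the cases $k \geq \omega(G)$ (respectively $|N(v_f)| \leq k$) are solvable in linear time, so the reduction may assume $k < \omega(G)$, which is exactly the regime in which the problem has content. Note also that reducing to {\sc Key Player} first, rather than to {\sc Firebreak} first, is essential: Lemma~\ref{FBP-solves-KPP} transfers hardness from {\sc Key Player} to {\sc Firebreak}, but not in the reverse direction.

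The engine of the reduction is a structural observation about split graphs. If $H = A \cup B$ is split with clique $A$ and independent set $B$, and we delete a set $S$ that does not contain all of $A$, then the surviving clique vertices lie in a single component, every surviving vertex of $B$ with a neighbour in $A \setminus S$ joins that component, and the only other components are the surviving vertices of $B$ whose entire neighbourhood has been deleted. Hence $c(H-S) = 1 + |\{ b \in B \setminus S : N(b) \subseteq S \}|$, and one never benefits from spending budget on $B$, since deleting a $B$-vertex can only destroy a potential component. Creating many components in a split graph therefore amounts to deleting clique vertices so as to strand (isolate) as many independent-set vertices as possible. Given a $t$-{\sc Way Vertex Cut} instance $(G,k,t)$, I would build a split graph $H$ of incidence type---with a clique on one of $V(G)$ or $E(G)$ and an independent set on the other, joined by the incidence relation---together with parameters $(k_H, t_H)$, arranged so that a vertex set whose deletion leaves at least $t$ components in $G$ corresponds precisely to a deletion stranding enough vertices of $B$ to yield at least $t_H$ components in $H$, and conversely.

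The main obstacle is exactly this correspondence. A split graph is structurally rigid: aside from the single clique component, every component produced by a deletion is forced to be an isolated vertex, whereas the components counted by $t$-{\sc Way Vertex Cut} may be large. The crux of the proof is therefore to design the gadgetry so that the (possibly large) components of $G - S$ are faithfully encoded by isolated independent-set vertices of $H - S$, and to verify the translation in both directions---that an affirmative $t$-{\sc Way Vertex Cut} instance yields an affirmative {\sc Key Player} instance and vice versa---while also reconciling the ``$|S| \leq k$'' constraint of $t$-{\sc Way Vertex Cut} with the ``$|S| = k$'' constraint of {\sc Key Player} by padding the deletion set with spare vertices, which in the $k < \omega(G)$ regime never decreases the component count. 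Once {\sc Key Player} is established to be {\NP}-hard on split graphs, Lemma~\ref{FBP-solves-KPP} supplies a polynomial-time reduction from {\sc Key Player} to {\sc Firebreak} on split graphs, so a polynomial-time algorithm for {\sc Firebreak} on split graphs would also solve {\sc Key Player} on split graphs; hence {\sc Firebreak} is {\NP}-hard on split graphs too. As both problems lie in {\NP}, both are {\NP}-complete on split graphs.
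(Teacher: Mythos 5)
Your overall architecture matches the paper's: establish {\NP}-hardness of \textsc{Key Player} on split graphs first, then transfer it to \textsc{Firebreak} via Lemma~\ref{FBP-solves-KPP} (and you are right that this direction of transfer is the one available). Your structural observation that, when the clique is not wholly deleted, $c(H-S) = 1 + |\{\, b \in B\setminus S : N(b) \subseteq S \,\}|$ is correct, and your padding argument for reconciling $|S|\leq k$ with $|S|=k$ is sound provided the padding vertices are taken from the surviving clique (padding with an already-isolated independent-set vertex would destroy a component). The genuine gap is the reduction itself. The incidence gadget you sketch cannot deliver the correspondence you need: if $H$ has its clique on $V(G)$ and its independent set on $E(G)$, then a deleted set $S \subseteq V(G)$ strands exactly those edge-vertices with both endpoints in $S$, so $c(H-S)-1$ counts the \emph{edges of $G$ inside $S$} --- a quantity unrelated to $c(G-S)$; the dual choice (clique on $E(G)$) counts vertices of $G$ all of whose incident edges were deleted, which is equally unrelated. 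You explicitly defer the construction of a gadget encoding the possibly large components of $G-S$ as ``the crux,'' but no such encoding exists along these lines, so the argument as written does not go through.

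Two repairs are available. The paper's route is to observe that no gadget is needed at all: Berger \emph{et al.}~\cite{BGZ2014} prove, relying on a construction of Marx~\cite{Marx2006}, that $t$-\textsc{Way Vertex Cut} is {\NP}-complete \emph{already when restricted to split graphs}. The input graph is then itself split, and the only discrepancy between the two problems is ``$|S|\leq k$'' versus ``$|S|=k$,'' which the paper closes by querying the \textsc{Key Player} oracle on $(G,k',t)$ for each $k' \in \{1,\dots,k\}$; your padding argument achieves the same effect. Alternatively, your incidence gadget does work if you change the source problem to \textsc{Clique}: with the clique of $H$ on $V(G)$ and the independent set on $E(G)$, a $k$-subset of $V(G)$ yields $c(H-S) \geq \binom{k}{2}+1$ if and only if $S$ induces a $k$-clique in $G$, since the number of stranded edge-vertices is at most $\binom{k}{2}$ with equality exactly for cliques (one must also argue that an optimal $S$ may be assumed to avoid the edge-vertices); this is essentially the construction underlying Marx's proof. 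Either repair completes the theorem, but as submitted the central step is missing and the stated plan for it would fail.
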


\begin{Proof}
Relying on a construction of Marx~\cite{Marx2006} that is restricted to split graphs,
Berger {\em et al.}\ show that the $t$-{\sc Way Vertex Cut} problem is {\NP}-complete when restricted to split graphs~\cite{BGZ2014}.
By using an oracle for the {\sc Key Player} problem, it is straightforward to answer any given instance $(G,k,t)$ of the $t$-{\sc Way Vertex Cut} problem.
In particular, for each $k' \in \{1,2,\ldots,k\}$ present the {\sc Key Player} oracle with $(G,k',t)$.
The $t$-{\sc Way Vertex Cut} problem has an affirmative answer
if and only if one or more of the $k$ answers provided by the {\sc Key Player} oracle is affirmative.
Hence the {\sc Key Player} problem is {\NP}-complete when restricted to split graphs.
It then follows from Lemma \ref{FBP-solves-KPP} that the {\sc Firebreak} problem is also {\NP}-complete when restricted to split graphs.
\end{Proof}

\begin{Corollary}
The \textsc{Firebreak} and \textsc{Key Player} decision problems are {\WOne}-hard on split graphs with parameters $k$ and $t$.
\end{Corollary}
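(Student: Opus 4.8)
The plan is to observe that the hardness established in Theorem~\ref{Thm-Intractable} is in fact witnessed by \emph{parameterized} reductions, so that the Corollary requires only two things to be checked: that the source problem is $\WOne$-hard and not merely {\NP}-complete, and that every reduction in the chain preserves the parameters $k$ and $t$ to within an additive constant.

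For the first point I would appeal to the strengthened form of the result of Berger \emph{et al.}~\cite{BGZ2014}. The construction of Marx~\cite{Marx2006} on which they rely is a parameterized reduction (ultimately from \textsc{Clique}), so it establishes not only {\NP}-completeness but $\WOne$-hardness of $t$-{\sc Way Vertex Cut} restricted to split graphs with respect to $k$ and $t$. This is the sole external ingredient.

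For the second point I would re-examine the two reductions in the proof of Theorem~\ref{Thm-Intractable}. The passage from $t$-{\sc Way Vertex Cut} to {\sc Key Player} can be realised as a many-one parameterized reduction on split graphs, thereby avoiding the Turing reduction implicit in the Theorem (which queries the oracle on $(G,k',t)$ for every $k' \le k$). The only discrepancy between the two problems is ``$|S| \le k$'' versus ``$|S| = k$'', and on a split graph $G$ with maximum clique $A$ this is harmless when $k < |A|$, the case $k \ge |A|$ being trivial for both problems (cf.\ Lemma~\ref{KPP-easy}). Indeed, if $|S| = k' < k$ and $c(G-S) \ge t$, then $A \setminus S \ne \emptyset$, and deleting a further vertex of $A \setminus S$ cannot lower the component count: the surviving clique vertices remain mutually adjacent, while former neighbours of the deleted vertex in the independent set can only become newly isolated. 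Padding $S$ up to size exactly $k$ in this way preserves $c(G-S) \ge t$, so $t$-{\sc Way Vertex Cut} and {\sc Key Player} have identical yes-instances on split graphs with $k < |A|$, a reduction preserving both parameters. The reduction from {\sc Key Player} to {\sc Firebreak} is already the many-one construction of Lemma~\ref{FBP-solves-KPP}, built in polynomial time with $k_2 = k_1$ and $t_2 = t_1 - 1$; it likewise preserves $k$ and alters $t$ only by a constant.

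Composing these parameterized reductions with the $\WOne$-hardness of $t$-{\sc Way Vertex Cut} yields the Corollary. The step I expect to require the most care is the elimination of the Turing reduction: one must confirm that the padding argument is monotone in the number of components, so that raising $|S|$ to exactly $k$ never destroys a component. This is precisely where the split structure is essential, since the single clique absorbs all the high-degree interactions and guarantees that a surplus deletion can always be charged to a clique vertex whose removal is non-decreasing in $c$.
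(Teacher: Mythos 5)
Your proof is correct, but it takes a more roundabout (and in one respect more careful) route than the paper. The paper's proof is essentially two sentences: it cites Theorem~16 of Marx~\cite{Marx2006} as \emph{already} establishing \WOne-hardness of {\sc Key Player} on split graphs with parameters $k$ and $t$, and then observes that the many-one reduction of Lemma~\ref{FBP-solves-KPP} is a parameterized reduction, which transfers the hardness to {\sc Firebreak}. You instead start from \WOne-hardness of $t$-{\sc Way Vertex Cut} on split graphs and explicitly bridge the gap between that problem's condition ``$|S|\leq k$'' and {\sc Key Player}'s ``$|S|=k$'' via a padding argument (deleting extra vertices of $A\setminus S$ cannot decrease the component count while a clique vertex survives), thereby replacing the Turing reduction used in Theorem~\ref{Thm-Intractable} with a many-one parameterized reduction. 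Your padding argument is sound: since $k<|A|$ at least one clique vertex always survives, so the clique component persists and each further clique deletion can only isolate additional independent-set vertices, whence $c$ is monotone under the padding. What your approach buys is an explicit treatment of a cardinality discrepancy that the paper's citation of Marx glosses over (Marx's formulation is of the ``at most $k$'' type, so strictly speaking the paper's one-line appeal to his Theorem~16 for {\sc Key Player} implicitly needs the same observation you make); what it costs is reliance on the unverified but plausible claim that the Marx/Berger {\em et al.}\ construction is itself a parameterized reduction from {\sc Clique}, which is exactly the external fact the paper also leans on. The final step, from {\sc Key Player} to {\sc Firebreak} via Lemma~\ref{FBP-solves-KPP} with $k_2=k_1$ and $t_2=t_1-1$, coincides with the paper's.
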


\begin{Proof}
For the {\sc Key Player} problem, the result has already been proved by Theorem~16 of~\cite{Marx2006}.
The reduction in Lemma~\ref{FBP-solves-KPP} is clearly polynomial in $k$ and $t$ and is thus parameterized in $k$ and $t$.
Thus the \textsc{Firebreak} problem also is {\WOne}-hard on split graphs with parameters $k$ and $t$.
\end{Proof}

We can also use split graphs to show that the {\sc Firebreak} problem is {\NP}-complete for bipartite graphs.

\begin{Theorem}
\label{Thm-Bipartite}
When restricted to bipartite graphs,
the {\sc Firebreak} decision problem is {\NP}-complete.
\end{Theorem}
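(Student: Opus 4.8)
The plan is to reduce from the \textsc{Key Player} problem on split graphs, which is {\NP}-complete by Theorem~\ref{Thm-Intractable}; membership of \textsc{Firebreak} in {\NP} has already been observed. Given a split-graph \textsc{Key Player} instance $(G_1,k_1,t_1)$ with clique $A$ and independent set $B$, I would first invoke Lemma~\ref{KPP-easy} to assume $k_1 < |A| = \omega(G_1)$, since otherwise the instance is decidable in linear time and can be mapped to a fixed trivially-affirmative or trivially-negative bipartite instance. The core construction is to \emph{unclique} $A$: delete all edges inside $A$, retain every $A$--$B$ edge, and adjoin a single new vertex $v_f$ joined to every vertex of $A$. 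The resulting graph $G'$ is bipartite with parts $A$ and $B \cup \{v_f\}$, and the associated \textsc{Firebreak} instance is $(G',k_1,t_1-1,v_f)$.

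The heart of the argument is a per-set correspondence. For any $S \subseteq A \cup B$ with $|S| = k_1$, I would show that the number of vertices of $G'-S$ separated from $v_f$ equals $c(G_1-S)-1$. The point is that in $G'$ the vertex $v_f$, being adjacent to all of $A$, plays exactly the connectivity role that the clique edges played in $G_1$: every surviving vertex of $A \setminus S$ is joined directly to $v_f$, so the whole core $(A\setminus S)\cup\{v_f\}$ together with every $B$-vertex still attached to it forms a single component, precisely mirroring the one clique-component of $G_1-S$ (nonempty because $k_1<|A|$). The only vertices separated from $v_f$ are therefore those $b \in B\setminus S$ all of whose neighbours lie in $S$, and these are exactly the isolated $B$-vertices counted, beyond the one core component, by $c(G_1-S)$. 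Taking the maximum over all such $S$ then gives $\sF(G',k_1,v_f) = \big(\max_S c(G_1-S)\big)-1$, so the bipartite \textsc{Firebreak} instance is affirmative if and only if the split \textsc{Key Player} instance is.

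The step I expect to require the most care is verifying that uncliquing $A$ creates no new separation opportunities for the firebreaker. Since a clique provides stronger internal connectivity than an independent set, one must confirm that attaching $v_f$ to all of $A$ restores exactly the right amount: no more, because $v_f$ cannot itself be deleted and the ground set $A\cup B$ of deletable vertices is identical in both instances, so the firebreaker gains no extra flexibility; and no less, because a single surviving clique vertex keeps $v_f$ and the core connected just as a single surviving clique vertex kept the \textsc{Key Player} core connected. Once this correspondence is established, bipartiteness of $G'$ is immediate from the construction and the reduction is plainly polynomial, which completes the proof.
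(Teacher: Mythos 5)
Your reduction is correct, but it runs along a genuinely different axis from the paper's. The paper reduces \textsc{Firebreak} on split graphs to \textsc{Firebreak} on bipartite graphs by subdividing every edge of the clique $G[A]$; the price of that construction is that the target shifts by $\binom{k}{2}$ and one must argue, via exchange arguments split into the cases $v_f \in A$ and $v_f \in B$ under the assumption $k < |A \cap N(v_f)|$, that an optimal separator can be taken disjoint from both the subdivision vertices and $B$. You instead reduce \textsc{Key Player} on split graphs (also {\NP}-complete by Theorem~\ref{Thm-Intractable}) by deleting the clique edges and adding a new source $v_f$ joined to all of $A$ --- essentially the construction of Lemma~\ref{FBP-solves-KPP} with the clique edges stripped out, together with the observation that once $v_f$ dominates $A$ those edges contribute nothing to connectivity with $v_f$. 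This yields an exact per-set identity (the vertices separated from $v_f$ in $G'-S$ are precisely the $b \in B\setminus S$ with $N(b)\subseteq S$, so the count is $c(G_1-S)-1$ whenever $A\setminus S\neq\emptyset$, which your assumption $k_1<|A|$ guarantees), so no optimality or exchange arguments are needed and the target shifts only by $1$. Your route is shorter and more elementary; the paper's buys a direct split-to-bipartite transfer of the \textsc{Firebreak} optimum itself, $\sF(G',k,v_f)=\sF(G,k,v_f)+\binom{k}{2}$, but both preserve $k$ and shift $t$ by a computable amount, so both establish the theorem (and indeed its parameterized strengthening). The one point you flagged as delicate --- that uncliquing creates no new separation opportunities --- is handled correctly by your observation that the deletable ground set is unchanged and a $B$-vertex reaches $v_f$ if and only if it retains a neighbour in $A\setminus S$, exactly as it reached the clique component in $G_1-S$.
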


\begin{Proof}
Suppose $(G, k, t, v_f)$ is an arbitrary instance of the {\sc Firebreak} problem on a split graph $G$,
where $V(G) = A \cup B$, $A$ induces a maximum clique in $G$, and $B$ is an independent set in $G$.
Without loss of generality we may assume that $k < |N(v_f)|$, as otherwise the problem is easily solved by Lemma~\ref{FBP-easy}.
We may further assume that $k < |A \cap N(v_f)|$ for otherwise an optimal set $S$ can be obtained by selecting each vertex of $A \cap N(v_f)$
along with $k - |A \cap N(v_f)|$ vertices of $V(G) \setminus ((A \cap N(v_f)) \cup \{v_f\})$
such that vertices of $B \cap N(v_f)$ are preferentially selected prior to selecting any other vertices of
$V(G) \setminus ((A \cap N(v_f)) \cup \{v_f\})$.

Let $G'$ be the graph obtained from $G$ by subdividing every edge of $G[A]$, so that
$|V(G')| = |V(G)| + \binom{|A|}{2}$ and $|E(G')| = |E(G)| + \binom{|A|}{2}$.
For convenience we let $C$ denote the set $V(G') \setminus V(G)$ of $\binom{|A|}{2}$ vertices that are newly created in this process,
and for any two distinct vertices $a_1,a_2 \in A$ let $c(a_1,a_2)$ be the vertex of $C$ that is a common neighbour of $a_1$ and $a_2$.
Observe that $G'$ is a bipartite graph with vertex bipartition $(A,B \cup C)$.
We will show that the instance $(G,k,t,v_f)$ of the {\sc Firebreak} problem has an affirmative answer if and only if
the instance $(G',k,  t+ \binom{ k }{2} , v_f)$
also has an affirmative answer.

First suppose that $S \subseteq (V(G) \setminus \{v_f\})$ is an optimal set for the instance $(G,k,t,v_f)$.
Since $k < |A \cap N_G(v_f)|$, in $G-S$ none of the vertices of $(A \setminus (S \cup \{v_f\}))$ are separated from $v_f$
and so if there should exist some vertex $u \in S \cap B$
then for each $v \in A \setminus (S \cup \{v_f\})$ the $k$-set $(S \setminus \{u\}) \cup \{v\}$ is also optimal.
From an iterated application of this observation it follows that there must exist an optimal set $S_0$ such that $S_0 \cap B = \emptyset$.
In $G'$, the number of vertices that are separated from $v_f$ by $S_0$ is $\sF(G,k,v_f) + \binom{|S_0 \cap A|}{2} = \sF(G,k,v_f) + \binom{k}{2}$
and hence $\sF(G',k,v_f) \geq \sF(G,k,v_f) + \binom{k}{2}$.

Now let $S' \subseteq (V(G') \setminus \{v_f\})$ be an optimal set for the instance $(G',k,  t+ \binom{ k }{2} , v_f)$
such that among all optimal sets, $S'$ has the least intersection with $C$.
Since $k < |A \cap N_G(v_f)|$, it follows that the set $A' = (A \setminus (S' \cup \{v_f\}))$ is not empty.
Recall that $v_f \in V(G)$, so either $v_f \in A$ or $v_f \in B$.

Consider the situation in which  $v_f \in B$.
If there should exist a vertex $y \in A'$ that is separated from $v_f$ in $G'-S'$,
then it is necessary that $c(y,z) \in S'$ for each $z \in (A \cap N_G(v_f)) \setminus S'$
and consequently $k = |S'| \geq |A \cap N_G(v_f) \cap S'| + |(A \cap N_G(v_f)) \setminus S'|$.
However, $k < |A \cap N_G(v_f)|$ and so none of the vertices of $A'$ are separated from $v_f$ in $G'-S'$.

If $v_f \in A$ then the set $A'$ equals $(A \cap N_G(v_f)) \setminus S'$,
which we partition into subsets $Y$ and $Z$
such that $Y$ consists of all vertices of $A'$ that are separated from $v_f$ in $G'-S'$,
and $Z$ consists of all vertices of $A'$ that are not separated from $v_f$ in $G'-S'$.
Since $k < |A \cap N_G(v_f)|$ then for some $a \in A'$ neither $a$ nor $c(a,v_f)$ is in $S'$, and hence $Z \neq \emptyset$.
For each vertex $y \in Y$ it is necessary that $\bigcup_{z \in Z \cup \{v_f\}} \{c(y,z)\} \subseteq S'$.
Consequently $k=|S'| \geq |A \cap N_G(v_f) \cap S'| + |Y|(1 + |Z|)$, which is at least $|A \cap N_G(v_f)|$ when $Y \neq \emptyset$
and thus it must be that $Y = \emptyset$.
Therefore, regardless of whether $v_f$ is in $A$ or $B$,
no vertex of $A'$ is separated from $v_f$ in $G'-S'$.

Let $w \in A'$.
If there should exist some vertex $x \in S' \cap C$
then the $k$-set $(S' \setminus \{x\}) \cup \{w\}$
would contradict the selection of the set $S'$ as an optimal set having minimum intersection with $C$.
Hence $S' \cap C = \emptyset$.
If there should exist some vertex $u \in S' \cap B$
then the $k$-set $(S' \setminus \{u\}) \cup \{w\}$ is also optimal.
Iterated application of this observation ensures that there is an optimal set $S''$ such that $S'' \cap B = S'' \cap C = \emptyset$.
In the graph $G$, the number of vertices that are separated from $v_f$ by $S''$
is $\sF(G',k,v_f) - \binom{|S'' \cap A|}{2} = \sF(G',k,v_f) - \binom{k}{2}$
and hence $\sF(G,k,v_f) \geq \sF(G',k,v_f) - \binom{k}{2}$.

So when $k < |A \cap N_G(v_f)|$ we conclude that
$\sF(G',k,v_f) = \sF(G,k,v_f) + \binom{k}{2}$.
Thus the {\sc Firebreak} instance $(G,k,t,v_f)$ for the split graph $G$ has an affirmative answer if and only if
the instance $(G',k,  t+ \binom{ k }{2},v_f)$
also has an affirmative answer.
\end{Proof}

\subsection{Some further comments about the {\sc Key Player} problem}

While the {\sc Firebreak} problem is the main focus of this paper, we do have some additional results
pertaining to the {\sc Key Player} problem.  In Theorem~\ref{Thm-Intractable} it was established that the
{\sc Key Player} problem is not only {\NP}-complete, but also that it remains so when restricted to the class of split graphs.
It happens that the problem is also {\NP}-complete when restricted to cubic planar graphs,
in contrast to the {\sc Firebreak} problem which we now show
can be solved in polynomial time when restricted to graphs of constant-bounded degree
(including cubic graphs).

\begin{Lemma}
The {\sc Firebreak} problem can be solved in polynomial time on graphs of constant-bounded degree.
\end{Lemma}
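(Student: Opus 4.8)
The plan is to reduce the problem to a bounded-parameter brute force by exploiting the degree bound. Fix the constant $d$ so that $\Delta(G) \leq d$ for every graph $G$ in the class under consideration, and let $(G,k,t,v_f)$ be a given instance with $n = |V(G)|$. First I would dispose of the trivial case: if $|N(v_f)| \leq k$, then by Lemma~\ref{FBP-easy} the instance is decided in linear time. Hence it remains only to treat instances with $|N(v_f)| > k$.

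The key observation is that in this remaining case the parameter $k$ is itself bounded by the constant $d$. Indeed, $k < |N(v_f)| = \deg_G(v_f) \leq \Delta(G) \leq d$, so $k \leq d-1$. This is really the whole point of the argument: the degree bound prevents $v_f$ from having many neighbours, and whenever $k$ is large enough to be interesting (namely $k \geq |N(v_f)|$) the problem has already collapsed to the easy case handled by Lemma~\ref{FBP-easy}. With $k \leq d-1$ a constant, I would simply enumerate every candidate firebreak. There are at most $\binom{n-1}{k} \in \bigO(n^{d-1})$ subsets $S \subseteq V(G)\setminus\{v_f\}$ of size $k$, and this quantity is polynomial in $n$ because $d$ is constant. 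For each such $S$, a single breadth-first (or depth-first) search from $v_f$ in $G-S$ identifies the component of $v_f$ and hence counts the number of vertices separated from $v_f$, in time $\bigO(n+|E(G)|) = \bigO(dn)$. I would return an affirmative answer precisely when some enumerated $S$ separates at least $t$ vertices from $v_f$, that is, when the maximum of these counts (which is exactly $\sF(G,k,v_f)$) is at least $t$. The total running time is $\bigO(n^{d-1}\cdot dn) = \bigO(d\, n^{d})$, which is polynomial for fixed $d$.

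There is no deep obstacle once the case split is made, and in that sense the argument is short; the one point that must be stated carefully is that the two cases $|N(v_f)| \leq k$ and $|N(v_f)| > k$ are exhaustive and that the degree bound forces $k \leq d-1$ in the latter, since this is exactly what converts the otherwise exponential enumeration over all $k$-subsets into a polynomial-time procedure. I would therefore present the degree inequality $k < \deg_G(v_f) \leq \Delta(G) \leq d$ as the crux of the proof and treat the enumeration-plus-search step as routine.
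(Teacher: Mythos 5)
Your proof is correct and follows essentially the same route as the paper's: dispose of the case $|N(v_f)|\leq k$ via Lemma~\ref{FBP-easy}, and otherwise observe that $k < \deg(v_f) \leq \Delta(G)$ is bounded by the constant, so exhaustive enumeration of all $k$-subsets is polynomial. Your version is in fact slightly more explicit than the paper's about the running time and the check performed for each candidate set.
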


\begin{Proof}
Let $m$ be a fixed integer
and let $(G,k,t,v_f)$ be an instance of the {\sc Firebreak} problem where $G$ is a graph on $n$ vertices such that $\Delta(G) \leq m$.
If $k \geq \deg(v_f)$ then the answer is affirmative if and only if $t \geq n-1-k$
because separating the maximum number of vertices from $v_f$ is accomplished by deleting $N(v_f)$ plus $k-|N(v_f)|$ other vertices.
If $k < \deg(v_f)$ then the answer can be computed by exhaustively considering all $\binom{n}{k}$ $k$-subsets of $V(G) \setminus \{v_f\}$
and determining whether any of them separate at least $t$ vertices from $v_f$.
Since $k$ is bounded by the constant $m$, this computation can be done in polynomial time.
\end{Proof}

Showing that the {\sc Key Player} problem is intractable on cubic planar graphs
involves consideration of the well-known {\sc Independent Set} problem.

\begin{tabbing}
\hspace*{9mm}\=
{\textsc{Independent Set}}\\
\>\hspace*{5mm}\=
{Instance}: A graph $G$ and an integer $m$.\\
\>\>
{Question}: Does $G$ contain an independent set of at least $m$ vertices?
\end{tabbing}


\begin{Theorem}
The {\sc Key Player} decision problem on cubic planar graphs is {\NP}-complete.
\end{Theorem}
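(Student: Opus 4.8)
The plan is to give a polynomial-time reduction from {\sc Independent Set} restricted to cubic planar graphs, which is well known to be {\NP}-complete, to the {\sc Key Player} problem restricted to cubic planar graphs. Membership in {\NP} has already been observed at the start of this section, so it remains only to establish {\NP}-hardness. The pleasant feature of this reduction is that the graph is not modified at all: given an instance $(G,m)$ of {\sc Independent Set} in which $G$ is a cubic planar graph on $n$ vertices, I would simply output the {\sc Key Player} instance $(G,\, n-m,\, m)$. Since $G$ is untouched it remains cubic and planar, and the new instance is computable in linear time once $n$ has been counted.

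For correctness I would prove that $G$ has an independent set of size at least $m$ if and only if $(G, n-m, m)$ is a yes-instance of {\sc Key Player}. For the forward direction, given an independent set $I$ with $|I| \geq m$, discard vertices until $|I| = m$ and take $S = V(G) \setminus I$, a set of exactly $n-m$ vertices. Because $I$ is independent, $G-S = G[I]$ has no edges, so its $m$ vertices form $m$ singleton components and $c(G-S) = m$, as required.

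The backward direction is where the single idea of the proof lives, and I would present it as the crux. Suppose $S \subseteq V(G)$ satisfies $|S| = n-m$ and $c(G-S) \geq m$. Then $G-S$ has exactly $|V(G)| - |S| = m$ vertices, and any graph on $m$ vertices has at most $m$ components; hence $c(G-S) = m$ and every component of $G-S$ is a single vertex. Equivalently $V(G) \setminus S$ is an independent set, and it has size $m$. This counting step, which forces each of the $m$ surviving vertices to be isolated, is the whole content of the equivalence; everything else is bookkeeping.

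The only genuine obstacle is invoking the correct restricted hardness result: I need {\sc Independent Set} (equivalently {\sc Vertex Cover}) to be {\NP}-complete specifically on cubic, that is $3$-regular, planar graphs rather than merely on planar graphs of maximum degree three, and this refinement is available in the literature. Given that input the reduction above is immediate and requires no gadgetry, since a vertex cover of $G$ is exactly a deletion set whose complement is independent, and deleting such a set from a cubic graph leaves only isolated vertices.
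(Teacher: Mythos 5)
Your proposal is correct and is essentially identical to the paper's proof: both reduce from {\sc Independent Set} on $3$-regular planar graphs (citing its known {\NP}-completeness) by mapping $(G,m)$ to the {\sc Key Player} instance $(G,\,|V(G)|-m,\,m)$ without modifying the graph. Your write-up merely spells out the counting argument that the paper leaves as ``easy to see.''
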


\begin{Proof}
%
We employ a reduction from the {\sc Independent Set} problem,
which Garey and Johnson established in 1977 to be {\NP}-complete on 3-regular planar graphs~\cite{GareyJohnson}.
Given an instance $(G,m)$ of the {\sc Independent Set} problem, construct an instance $(G,|V(G)|-m,m)$ of the {\sc Key Player} problem.
It is easy to see that {\sc Independent Set} has an affirmative answer if and only if this instance of the {\sc Key Player} problem has an affirmative answer. Thus an oracle for the {\sc Key Player} problem can be used to efficiently solve the {\sc Independent Set} problem.
\end{Proof}

Having established that the {\sc Key Player} problem is intractable for a variety of classes of graphs,
we now proceed to consider the special case in which the parameter $k$ is restricted to being the connectivity of the graph in question.
For any nontrivial graph $G$, its connectivity $\kappa(G)$ is the size of a smallest set $S$ of vertices such that $G-S$ is not a connected graph;
such a set $S$ is called a {\em cut} (or $k$-{\em cut} when we wish to explicitly mention the size of the cut).

As we shall see, when $k=\kappa(G)$ the {\sc Key Player} problem can be solved in polynomial time.
An initial thought for how to potentially prove that this is so is to enumerate all of the $\kappa(G)$-cuts of $G$
and if they are polynomial in number then simply calculate $c(G-S)$ for each $\kappa(G)$-cut $S$.
However, unlike edge cuts of size $\kappa'(G)$ (of which there are at most $\binom{n}{2}$;
see Section~4.3 of~\cite{NagamochiIbaraki2008} for a proof),
there can be exponentially many $\kappa(G)$-cuts in a graph $G$, as is the case with the graph illustrated in Figure~\ref{Fig-ManyCuts}.
Hence the na\"{\i}ve idea of examining each $\kappa(G)$-cut individually will not serve as a valid approach.

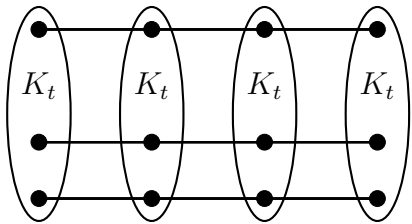
\begin{figure}[htbp]

\begin{center}
\begin{tikzpicture}[scale=.75]
\foreach \x in {0,2,4,6}
   {
   \draw [thick] (\x,1.5) ellipse (16pt and 54pt);
   \node at (\x,2) {$K_t$};
   \foreach \y in {0,1,3}
      {
      \draw [black,fill] (\x,\y) circle [radius=0.14];
      \draw [-, black, thick] (0,\y) -- (6,\y);
      }
   }

\end{tikzpicture}
\end{center}

\begin{quote}
\caption{A graph $G$ on $n=4t$ vertices.  $G$ consists of four copies of $K_t$ that are joined by $t$ disjoint paths of length 3.
The graph has connectivity $\kappa(G)=t$ and more than $2^t$ cut sets of size $\kappa(G)$.}
\end{quote}
\end{figure}
\label{Fig-ManyCuts}

Instead, we consider the notion of a $k$-{\em shredder} in a $k$-connected graph, which is defined to be
a set of $k$ vertices whose removal results in at least three components being disconnected from one another;
note that it is necessary here that $k \geq \kappa(G)$ since $G$ must be $k$-connected and each $k$-shredder is also a $k$-cut.
An algorithm that is capable of finding all of the $k$-shredders of a graph $G$ on $n$ vertices in
polynomial time is presented in~\cite{CheriyanThurimella1999}.
The actual number of $k$-shredders is determined in~\cite{Egawa2008} to be at most $\frac{2n}{3}$ when $k \geq 4$.
We can thus solve the {\sc Key Player} problem in polynomial time when $k = \kappa(G)$
by following the steps of the algorithm presented below.

\parbox{18cm}{
\begin{Algorithm}

~

\parbox{15cm}{
\begin{enumerate}
\item Let $k = \kappa(G)$.
\item If $k \geq 4$ then use the algorithm of~\cite{CheriyanThurimella1999} to find all of the $k$-shredders of $G$.
      As~\cite{Egawa2008} asserts that there are at most $\frac{2n}{3}$ of them, we then exhaustively check to see which $k$-shredder $S$ maximises $c(G-S)$.
      If there should happen to be no $k$-shredders, then it must be that every $k$-cut produces exactly two components.
\item If $k \leq 3$ then exhaustively check each $k$-subset $S$ of $V(G)$ to see which $k$-sets are cut sets,
      and then determine which $k$-cut maximises $c(G-S)$.
      The number of $k$-subsets that must be checked is $\binom{n}{k}$, which is polynomial in $n$ since $k$ is either 1, 2 or 3.
\item Having determined the maximum number of components that can result from the deletion of any $k$-cut,
      compare this quantity with $t$ to answer the given instance of the {\sc Key Player} problem.
\end{enumerate}
}

\end{Algorithm}
}

\section{Graphs with Constant-Bounded Treewidth}
\label{Sec-BoundedTreewidth}

Before we review the technical details of treewidth, we first observe that the {\sc Firebreak} problem can be easily solved
in the case where the graph in question is a tree.

\begin{Theorem}
\label{Thm-FirebreakTrees}
The {\sc Firebreak} problem can be solved in polynomial time on trees.
\end{Theorem}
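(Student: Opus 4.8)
The plan is to root the tree $T$ at the fire vertex $v_f$ and reduce the optimisation to a knapsack-style dynamic program. For a vertex $v$, write $T_v$ for the subtree consisting of $v$ together with all of its descendants. The first step is the observation that, once $T$ is rooted at $v_f$, a vertex $w \neq v_f$ is separated from $v_f$ in $T - S$ if and only if $w \notin S$ and the unique $v_f$--$w$ path meets $S$; equivalently, $w$ lies in $T_s$ for some $s \in S$ but $w \notin S$. Hence the set of separated vertices is exactly $\big(\bigcup_{s \in S} T_s\big) \setminus S$, and since $v_f$ lies in no $T_s$ (as $s \neq v_f$) we have $S \subseteq \bigcup_{s \in S} T_s$. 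Consequently the number of separated vertices equals $\big|\bigcup_{s \in S} T_s\big| - k$, so maximising the number of saved vertices is the same as maximising the size of the union of the chosen subtrees.

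Next I would record two simplifications. First, if $s, s' \in S$ with $s$ an ancestor of $s'$, then $T_{s'} \subseteq T_s$, so $s'$ contributes nothing to the union; thus it suffices to consider sets $S$ forming an antichain, for which the subtrees $T_s$ are pairwise disjoint and the union has size $\sum_{s \in S} |T_s|$. Second, adjoining a vertex to $S$ never decreases the union, so (provided $k \leq |V(T)| - 1$, which is necessary for any valid set to exist at all, the case $k > |V(T)|-1$ being trivially negative) the maximum of $\big|\bigcup_{s \in S} T_s\big|$ over all $k$-subsets of $V(T)\setminus\{v_f\}$ equals its maximum $U$ over all such subsets of size at most $k$. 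This gives $\sF(T, k, v_f) = U - k$, and the instance is affirmative precisely when $U - k \geq t$, i.e. $U \geq t + k$.

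It then remains to compute $U$ by a dynamic program over the rooted tree. For each vertex $v$ and each $j \in \{0, 1, \ldots, k\}$, let $g(v, j)$ be the maximum number of vertices of $T_v$ that can be covered using at most $j$ cut vertices drawn from $T_v$. At a vertex $v$ there are two options: cut at $v$, which covers all $|T_v|$ vertices at the cost of one cut, or refrain from cutting at $v$ and distribute the budget among the subtrees rooted at the children of $v$. This yields $g(v,j) = \max\{|T_v|, \max \sum_i g(c_i, j_i)\}$, where the term $|T_v|$ is available only when $j \geq 1$ and the inner maximum is over nonnegative integers $j_i$ summing to at most $j$ as $c_i$ ranges over the children of $v$. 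Each per-vertex step is the standard bounded-knapsack combination of the children's tables, and the value $U$ is read off at the root using the distributing option alone, since cutting at $v_f$ is forbidden. The whole computation runs in time polynomial in $|V(T)|$ and $k \leq |V(T)|$.

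The bulk of the work, and the part most prone to error, is not the dynamic program itself (which is routine once set up) but the two reductions in the second paragraph: verifying the exact identity $\big|\bigcup_{s \in S} T_s\big| - k$ for the number of separated vertices, including the boundary behaviour at $v_f$, and justifying that restricting to antichains and then padding up to exactly $k$ cuts costs nothing. I would isolate these as short claims preceding the invocation of the dynamic program, so that the passage from $\sF(T,k,v_f)$ to $U-k$ is transparent.
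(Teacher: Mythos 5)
Your proposal is correct, but it takes a noticeably different algorithmic route from the paper. Both arguments begin by rooting $T$ at $v_f$ and computing subtree sizes, and both rest on the same combinatorial fact (which you make fully explicit and the paper leaves implicit): a vertex $w \notin S$ is separated precisely when some $s \in S$ is an ancestor of $w$, so the separated set is $\bigl(\bigcup_{s \in S} T_s\bigr) \setminus S$. From there the paper takes a shortcut: after disposing of the case $|N(v_f)| \leq k$ via its Lemma~\ref{FBP-easy}, it observes that an optimal $S$ may be taken inside $N(v_f)$ (cutting at a child of $v_f$ dominates cutting anywhere in that child's subtree), so the answer is obtained by greedily selecting the $k$ children of $v_f$ with the largest subtrees and testing $\sT(v_1)+\dotsb+\sT(v_k)-k \geq t$. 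You instead run a knapsack-style dynamic program $g(v,j)$ over the whole rooted tree. This is heavier than necessary --- indeed your own recurrence collapses, since cutting at $v$ covers all of $T_v$, so $g(v,j)=|T_v|$ for every $j\geq 1$ and $g(v,0)=0$, at which point the root step reduces exactly to the paper's ``pick the $k$ largest child subtrees'' rule --- but it is sound, it absorbs the $|N(v_f)|\leq k$ case without a separate lemma, and your careful treatment of the identity $\lvert\bigcup_{s\in S}T_s\rvert-k$, the antichain reduction, and the at-most-$k$ versus exactly-$k$ padding supplies justification that the paper's terser proof glosses over. The trade-off is clarity and generality (your DP would survive vertex weights or other variants) versus the paper's near-linear-time directness.
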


\begin{Proof}
Suppose $(T,k,t,v_f)$ is an instance of the {\sc Firebreak} problem where $T$ is a tree on $n$ vertices.
If $|N(v_f)| \leq k$ then a polynomial time solution follows from Lemma~\ref{FBP-easy},
so we henceforth assume that $|N(v_f)| > k$.

Root the tree $T$ at $v_f$.
For each vertex $v$ of $T$ define $T(v)$ to be the subtree of $T$ rooted at $v$
and let $\sT(v)$ denote the number of vertices in $T(v)$.
If $v$ is a leaf in the tree then clearly $\sT(v) = 1$.
For any other vertex $v$ with children $x_1, \dotsc , x_m$, we have that $\sT(v) = \sT(x_1) + \dotsb + \sT(x_m)+1$.
The computation of each $\sT(v)$ can be clearly done in $\bigO (n)$ time.

To find an optimal $k$-set $S$ (namely one that separates the most vertices from $v_f$)
simply select  $k$ vertices in $N(v_f)$ having the $k$ greatest subtree sizes.
If $v_1, \dotsc , v_{k}$ are these $k$ vertices, then there are $\sT(v_1) + \dotsb + \sT(v_{k}) - k$ vertices not in the same connected component of $v_f$.
%
%

To answer the given instance of the {\sc Firebreak} problem, it now suffices to ask if
$\sT(v_1) + \dotsb + \sT(v_{k}) - k \geq t$.
Since both the computation of each $\sT(v)$ and the identification of the $k$ largest values of $\sT(v)$ can be done in polynomial time,
the problem can therefore be answered in polynomial time.
\end{Proof}

The proof of Theorem~\ref{Thm-FirebreakTrees} comes close to proving that the {\sc Firebreak} problem can be solved in linear $\bigO (n)$ time for a tree on $n$
vertices, except that the task of selecting the $k$ neighbours of $v_f$ with the greatest subtree sizes may require a nonlinear sort to be performed.
However, it will be shown later in this section that a linear time solution does nevertheless exist.
Our approach will be to consider the effect that bounded treewidth has on the complexity of the problem.
The treewidth parameter, defined below, was introduced by Robertson and Seymour~\cite{RS1986}.

\begin{Definition}
A \textit{tree decomposition} of a graph $G$ is a pair $(X,T=(I,F))$ where $X = \{X_i : i \in I\}$ is a family of subsets of $V(G)$, and $T$ is a tree whose vertices are the subsets $X_i$ such that:

\begin{enumerate}
\item $\bigcup_{i \in I} X_i = V(G)$.
\item For every edge $uv \in E(G)$, both $u$ and $v$ are in some $X_i$, $i \in I$.
\item If $i, j, k$ are vertices of $T$, and $k$ lies on the (unique) path from $i$ to $j$, then $X_i \cap X_j \subseteq X_k$.
\end{enumerate}
The \textit{width} of a tree decomposition is max$\{|X_i| - 1 : i \in I \}$.
The \textit{treewidth} $\tw(G)$ of a graph $G$ is the minimum width of all tree decompositions of $G$.
\end{Definition}

It is easy to see that trees have treewidth at most 1.
Other graphs with small treewidth are, in a sense, tree-like.
For instance,
if the treewidth of a graph $G$ is bounded by a constant ({\em i.e.}, $\tw(G) \leq w$),
then it follows from Lemma~3.2 of~\cite{RS1995} that $|E(G)| = \bigO (n)$ where $n = |V(G)|$.
Graphs that are in some way similar to trees often lend themselves to tractable solutions
for problems that are intractable for graphs in general
(see~\cite{Arnborg1985,AP1989,Bodlaender1987} for details of several examples).

Moreover,
Bodlaender has presented an algorithm that finds a tree decomposition of a graph $G$ in time that is linear in the number of vertices
and exponential in the cube of the treewidth~\cite{Bodlaender1996}.
For graphs having constant-bounded treewidth,
it is therefore possible to find tree decompositions in linear time.

Theorem~\ref{Thm-FirebreakTrees} demonstrated that the {\sc Firebreak} problem is easily solved for trees.
To show that it
is also tractable for graphs for which the treewidth is bounded by a constant,
we will rely on a powerful result that is based on work of Courcelle, independently proved by Borie, Parker and Tovey,
and further extended by Arnborg, Lagergren and Seese~\cite{ALS1991,BPT1992,Courcelle1990}.
A survey by Langer {\em et al.}~\cite{LRSS2014} presents it in a slightly more general form than we require.
For our purposes, the following will suffice:

\begin{Theorem}[see Theorem~30 of~\cite{LRSS2014}]
\label{Thm-Courcelle}
Let $G$ be a graph on $n$ vertices, let $w$ be a constant,
and let $\sP$ be a graph theoretic decision problem that can be expressed in the form of
extended monadic second-order logic.
If $\tw(G) \leq w$
then determining whether $G$ has property $\sP$ can be accomplished in time
$\bigO(f_{\sP}(w) \cdot n)$
where $f_{\sP}$ is a function that depends on the property $\sP$.
\end{Theorem}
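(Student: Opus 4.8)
The statement is the familiar algorithmic metatheorem of Courcelle, in the extended form due to Arnborg--Lagergren--Seese and, independently, Borie--Parker--Tovey. Since it is being invoked rather than developed here, the honest ``proof'' is simply to cite Theorem~30 of~\cite{LRSS2014}; the plan below sketches the route one would take were a self-contained argument required, and it is this structure that justifies the linear dependence on $n$ that we will exploit for the \textsc{Firebreak} problem.

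First I would fix a tree decomposition $(X,T)$ of $G$ of width at most $w$ and convert it in linear time into a \emph{nice} tree decomposition, in which every node is a leaf, introduce, forget, or join node and $|I| = \bigO(n)$. The purpose of this normalisation is that the graph is then assembled essentially one vertex at a time as one moves from the leaves towards the root, so that a dynamic program can process the decomposition bag by bag.

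The core step is to attach to each node $i$ a bounded-size \emph{type} recording exactly the information about the partial graph induced below $i$ that is needed to decide the property $\sP$: for each way the free set variables of the defining formula can meet the current bag $X_i$, the type stores whether the relevant subformulas are satisfiable subject to that boundary behaviour. One then shows that the type at $i$ is determined by the types of its children together with $X_i$, giving bottom-up recurrences for the four node kinds. Because the number of distinct types is bounded by a function $f_{\sP}(w)$ of the width and the fixed formula, each step of the recurrence costs only a function of $w$, and with $\bigO(n)$ nodes this yields the claimed $\bigO(f_{\sP}(w) \cdot n)$ running time. For the \emph{extended} (optimisation) flavour of the logic, each type is further augmented with the best attainable value of the cardinality objective consistent with its boundary behaviour, and the recurrences propagate these optima.

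The conceptual heart, and the step I expect to be the main obstacle in a from-scratch treatment, is justifying that a finite set of types always suffices. This rests on the equivalence between MSO-definability and recognizability by finite tree automata (Doner; Thatcher--Wright; B\"uchi): one encodes $(G,(X,T))$ as a labelled tree over a finite alphabet determined by $w$, translates the graph MSO sentence defining $\sP$ into an MSO sentence over this encoding, and appeals to the fact that every MSO-definable tree language is recognised by a finite bottom-up automaton whose states play the role of the types above. Getting the encoding and the syntactic translation exactly right, and extending the automaton model to carry the optimisation values, is the delicate part. As all of this is established in the cited literature, the paper is entitled to apply Theorem~30 of~\cite{LRSS2014} directly.
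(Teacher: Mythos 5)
The paper offers no proof of this statement at all --- it is imported verbatim as Theorem~30 of~\cite{LRSS2014} --- and your proposal correctly recognises that citing that result is the intended justification. Your supplementary sketch (nice tree decomposition, bounded-size types propagated by a bottom-up dynamic program, the MSO-to-tree-automata translation, and the augmentation with cardinality optima for the extended logic) is an accurate account of how the cited theorem is actually established, so there is nothing to correct.
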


Although the function $f_{\sP}$ may not be polynomial, if $w$ is a constant then so too is $f_{\sP}(w)$.
Hence decision problems that have extended monadic second-order (EMSO) formulations
are fixed-parameter tractable.
Monadic second-order (MSO) logic expressions for graphs are based on
\begin{itemize}
\setlength{\itemsep}{-0.20\baselineskip}
\item variables for vertices, edges, sets of vertices and sets of edges,
\item universal and existential quantifiers,
\item logical connectives of conjunction, disjunction and negation,
\item and binary relations to assess set membership, adjacency of vertices,
      incidence of edges and vertices, and equality for vertices, edges and sets.
\end{itemize}
We will only need to consider vertices and sets thereof, which will be respectively denoted by lower case and upper case variable names.
Predicates can be constructed from the basic ones and incorporated into expressions (in this manner a predicate for implication can be built).
To provide an illustrative example,
the expression
\begin{center}
$\ds \exists X \exists Y \,
\big(\forall u \, ((u \in X) \wedge (u \not\in Y)) \vee ((u \not\in X) \wedge (u \in Y)) \big)
\wedge
\linebreak
\big(\forall u \forall v \, ((\adj(u,v)) \Rightarrow ((u \in X) \wedge (v \in Y)) \vee ((u \in Y) \wedge (v \in X))   ) \big)
$
\end{center}
encodes whether a given graph is bipartite, where $\adj(u,v)$ represents a Boolean predicate that evaluates whether vertices $u$ and $v$ are adjacent.

Extended MSO logic has additional features that enable set cardinalities to be considered.
The survey by Langer {\em et al.}~\cite{LRSS2014} provides an excellent overview, to which we direct readers for more details.


Since the factor $f_{\sP}(w)$ in the conclusion of Theorem~\ref{Thm-Courcelle} effectively becomes a hidden constant,
it follows that
deciding whether a graph $G$ has the property $\sP$ can be done in linear time
when the hypothesis of the theorem is satisfied.
With this in mind,
we now show that the {\sc Firebreak} problem is tractable when restricted to graphs having treewidth at most a constant $w$.

\begin{Theorem}
The {\sc Firebreak} problem can be solved in linear time for graphs with constant-bounded treewidth.
\end{Theorem}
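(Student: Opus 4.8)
The plan is to obtain the result as a direct application of Theorem~\ref{Thm-Courcelle}: once we exhibit an extended monadic second-order (EMSO) sentence that is satisfied by the input structure exactly when $(G,k,t,v_f)$ is a yes-instance of the {\sc Firebreak} problem, the constant-bounded treewidth hypothesis immediately yields an algorithm running in the linear time $\bigO(f_{\sP}(w)\cdot n)$ promised by that theorem. The entire task therefore reduces to writing such a formula and verifying that it captures the problem.

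The firebreak is captured by a set variable $S$, and I would introduce a second set variable $D$ meant to hold vertices that are separated from $v_f$ in $G-S$. The central idea is that ``separated from $v_f$ in $G-S$'' can be enforced without an explicit reachability predicate: it suffices to demand that $D$ exclude $v_f$ and be \emph{closed} under passing to neighbours that lie outside $S$. Concretely I would use the sentence
\begin{align*}
\exists S\, \exists D\ \Big[\, &|S| = k \ \wedge\ |D| \geq t \ \wedge\ (v_f \notin S) \ \wedge\ (v_f \notin D) \ \wedge\ \big(\forall x\, \neg\,(x \in S \wedge x \in D)\big) \\
&\wedge\ \big(\forall u\, \forall v\, \big((u \in D) \wedge \adj(u,v) \wedge (v \notin S) \Rightarrow (v \in D)\big)\big)\,\Big],
\end{align*}
where $|S|=k$ and $|D|\geq t$ are the cardinality atoms supplied by the extended fragment, $v_f$ is the distinguished input vertex, and the final conjunct asserts that $D$ is closed in $G-S$.

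To see that this captures the problem I would argue both directions. In the forward direction, given a firebreak $S$ of size $k$ with $v_f\notin S$ that separates at least $t$ vertices, take $D$ to be the set of all vertices of $G-S$ that lie outside the connected component of $v_f$; this $D$ is disjoint from $S$, avoids $v_f$, has size at least $t$, and is closed, since a neighbour (outside $S$) of a vertex unreachable from $v_f$ is itself unreachable from $v_f$. Conversely, if the sentence holds, then a short induction along any path of $G-S$ from a vertex of $D$ to $v_f$ would force $v_f\in D$, contradicting $v_f\notin D$; hence every vertex of $D$ is genuinely separated from $v_f$, and there are at least $t$ of them.

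The closure/separation equivalence above is routine, so in the write-up I would concentrate on the two points that make the sentence legitimately fall under Theorem~\ref{Thm-Courcelle}, which is where I expect the only real friction. First, $k$ and $t$ are part of the input rather than fixed constants, so the atoms $|S|=k$ and $|D|\geq t$ must be justified within the extended fragment; equivalently, one runs the associated linear EMSO optimisation that maximises $|D|$ over assignments satisfying the constant-size MSO core together with $|S|\leq k$, and then compares the optimum against $t$. Since $k,t\leq n$, neither formulation disturbs linearity. Second, the formula names the specific vertex $v_f$, which is handled by treating $v_f$ as a distinguished element (for instance by marking it with a fresh unary label), an operation that leaves the treewidth unchanged. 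Granting these two points, the sentence is an EMSO sentence over a structure of the same bounded treewidth, and Theorem~\ref{Thm-Courcelle} delivers the claimed linear running time.
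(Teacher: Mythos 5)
Your proposal is correct and follows essentially the same route as the paper: both exhibit an EMSO encoding consisting of a core MSO formula with a firebreak set variable and a ``separated'' set variable constrained to be closed under neighbours outside the firebreak, handle $|S|=k$ and $|D|\geq t$ through the cardinality machinery of the extended fragment (the paper uses the evaluation relation of Definition~18 of~\cite{LRSS2014}), and then invoke Theorem~\ref{Thm-Courcelle}. Your additional remarks on verifying the closure/separation equivalence and on treating $v_f$ as a distinguished element are sound elaborations of points the paper leaves implicit.
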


\begin{proof}
Let $\varphi$ represent the following logic expression with two
set variables ($S$ and $X$).

\begin{center}
$
\ds
\varphi =
( v_f \not\in S)
\wedge
(v_f \not\in X)
\wedge
\big( \forall y \, (y \in S) \Rightarrow (y \not\in X) \big)
\wedge
\linebreak
\big( \forall x \forall y \, \big( (x \in X) \wedge (\adj(x,y)) \wedge (y \not\in S) \big) \Rightarrow  (y \in X)   \big)
$
\end{center}
Observe that $\varphi$ encodes whether the set $S$ separates the set $X$ from a designated vertex $v_f$.
To take into consideration the cardinalities of the sets $S$ and $X$,
we now describe how to construct an evaluation relation $\psi$ as indicated by Definition~18 of~\cite{LRSS2014}.
Following the notation of~\cite{LRSS2014}, given that we have two
sets ($S$ and $X$) and two integer input values ($k$ and $t$),
choosing $m=1$ will result in $\psi$ having four variables:
\begin{center}
$\ds
y_1 = \sum_{u \in S} w_1(u)
\hspace*{11mm}
y_2 = \sum_{u \in X} w_1(u)
\hspace*{11mm}
y_3 = k
\hspace*{11mm}
y_4 = t
$
\end{center}
Define the weight function $w_1 : V(G) \rightarrow \Real$ such that $w_1(v) = 1$ for each $v \in V(G)$.
Now, let $\psi$ be the evaluation relation
$$
(y_1 = y_3) \wedge (y_2 \geq y_4)
$$
We have adhered to Definition~18 of~\cite{LRSS2014}.
Hence we have created an EMSO expression
that encodes the {\sc Firebreak} decision problem.
The result now follows from Theorem~\ref{Thm-Courcelle}.
\end{proof}

\section{Intersection Graphs}
\label{Sec-IntersectionGraphs}
Given a family of sets $\mathcal{S} = \{S_0, S_1, \ldots, S_k\}$, the \emph{intersection graph} of $\mathcal{S}$ is a graph $G= (V, E)$ for which there exists a bijection $f$ between $V$  and $\mathcal{S}$ such that $u$ is adjacent to $v$ in $G$ if and only if $f(u) \cap f(v) \neq \emptyset$, that is, $f(u)$ \emph{intersects} $f(v)$.  We say that the bijective assignment and the family of sets are a \emph{representation} of $G$.  When we restrict the nature of the representing sets, we can restrict the class of representable graphs, and structured representations have provided a wide variety of tractability results (many examples are listed in \cite{golumbic}).

Here, we give polynomial-time algorithms for two classes of intersection graphs: subtree intersection graphs of limited leafage and permutation graphs.  In both cases, we use an approach that sweeps the representation for separators, allowing us to exhaustively check these separators for firebreak feasibility.

\subsection{Intersection graphs of subtrees in a tree}

In this section we focus on the intersection graphs of subtrees in a tree of constant bounded leafage, for which we show that the {\sc Firebreak} problem can be solved in polynomial time.   For our purposes, two subtrees are considered to intersect if they share at least one vertex.  The intersection graphs of a tree are the chordal graphs, and the intersection graphs of trees with a constant bounded number of leaves (the \emph{leafage}) can be recognised and a representation constructed in time $\bigO (n^3)$~\cite{HabibStacho} (where $n$ is the number of vertices in the graph to be represented). Because these are a subfamily of chordal graphs, for which $\tw(G) = \omega(G) - 1$, they do not in general have constant-bounded treewidth and so the results from Section~\ref{Sec-BoundedTreewidth} do not apply to them.




\begin{Theorem}
\label{Thm-Intersection}
The {\sc Firebreak} problem on a graph $G= (V, E)$ that is the intersection graph of subtrees of a tree of leafage $\ell$ can be solved in time $\bigO (|V|^{2\ell +1})$.
\end{Theorem}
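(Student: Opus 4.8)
The plan is to exploit the fact that a subtree representation in a host tree $H$ with at most $\ell$ leaves forces every separator of interest to be built from a bounded number of \emph{clique separators}, each pinned to a single node of $H$. First I would take a representation $\{T_v : v \in V\}$ of $G$ in such a host tree $H$, which can be built in $\bigO(|V|^3)$ time by the algorithm of Habib and Stacho~\cite{HabibStacho}; as usual we may assume $|N(v_f)| < k$, since otherwise Lemma~\ref{FBP-easy} applies. Two subtrees are adjacent in $G$ exactly when they share a node of $H$, and by the Helly property for subtrees of a tree the set $C_p = \{v : p \in T_v\}$ of subtrees through any fixed node $p$ of $H$ is a clique, whose deletion severs every path of $G$ whose corresponding walk in $H$ passes through $p$.

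The structural heart of the argument is the following observation about an arbitrary firebreak $S$. Let $R$ be the component of $v_f$ in $G - S$ and set $U_R = \bigcup_{v \in R} T_v$. Since $R$ is connected, $U_R$ is a connected subtree of $H$, and since no vertex of $\bar R = V \setminus (S \cup R)$ may meet $U_R$ (such a vertex would be adjacent to $R$ and hence lie in $R$), the forest $H \setminus U_R$ receives all of the separated subtrees. Each component of $H \setminus U_R$ contains a distinct leaf of $H$, so there are at most $\ell$ of them and $U_R$ has at most $\ell$ exit edges. I would then argue that the \emph{minimal} separator producing the region $R$ is the union, over these at most $\ell$ exit edges, of the subtrees that span an exit edge, and that for each exit edge the spanning subtrees form a clique that is a minimal $u$--$w$ separator for a suitable pair $u \in R$, $w \in \bar R$ on opposite sides of that edge. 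Hence every region achievable with budget $k$ is already achievable by deleting a union of at most $\ell$ minimal vertex-pair separators, each read directly off $H$.

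This licenses the algorithm: enumerate every choice of at most $\ell$ pairs of vertices $(u_1,w_1),\dots,(u_\ell,w_\ell)$ --- there are $\bigO(|V|^{2\ell})$ of them --- and for each choice form $S$ as the union of the corresponding minimal separators. For each candidate $S$ with $|S| \le k$, a single linear-time search of $G - S$ counts the vertices separated from $v_f$; padding $S$ up to size exactly $k$ using spare vertices of $R \setminus \{v_f\}$ (and eating into $\bar R$ only when $R$ is too small) reconciles the problem's requirement $|S| = k$ with the target $t$. Reporting the best outcome decides the instance, and the total cost is $\bigO(|V|^{2\ell}) \cdot \bigO(|V|) = \bigO(|V|^{2\ell+1})$. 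I expect the main obstacle to be the structural lemma: proving that an optimal firebreak can be replaced, without decreasing $\sF(G,k,v_f)$, by a union of at most $\ell$ minimal vertex-pair separators, and correctly bounding their number by the leafage through the leaf-per-component count. The padding and exact-size bookkeeping is routine, but must be stated carefully so that the ``at most $k$'' viewpoint used in the enumeration agrees with the ``exactly $k$'' formulation of the decision problem.
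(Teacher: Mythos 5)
Your proposal is correct and follows essentially the same route as the paper: both read a polynomial-size family of clique separators off the host tree, use the leafage bound to argue that at most $\ell$ of them (one per leaf-side branch of the complement of the region containing $v_f$) suffice, and then exhaustively test the $\bigO(|V|^{2\ell})$ resulting unions in linear time each. The only wrinkle is your indexing of candidates by vertex pairs $(u_i,w_i)$: a non-adjacent pair in a chordal graph may have several minimal $u$--$w$ separators, so ``the corresponding minimal separator'' is not well-defined as stated; enumerating the $\bigO(|V|^2)$ separators themselves (one per host-tree node, as the paper does) removes this ambiguity without changing the count.
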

\begin{proof}

Let $(G,k,t,v_f)$ be an instance of the {\sc Firebreak} problem where $G$ is the intersection graph of subtrees $\mathcal{T}$ of a tree $T = (V_T, E_T)$ with constant bounded number of leaves $\ell$, and denote by $\mathcal{T}(v)$ the subtree of $T$ that represents vertex $v \in V(G)$.  For convenience, let $n = |V|$. Recall that we denote by $S$ the set of $k$ vertices that we remove from the graph $G$ in order to form a firebreak.

We make a simplifying assumption that we should not place in $S$ a neighbour of $v_f$ that is adjacent only to other neighbours of $v_f$,
as such a vertex being included in $S$ can only ever protect from burning that single vertex and no others;
there is always a non-worse choice of vertex for inclusion in $S$.
Note also that we assume that $v_f$ has more than $k$ neighbours, for if not then we apply Lemma~\ref{FBP-easy} to resolve the question in $\bigO(n)$ time.

We argue that we can find a polynomially-bounded number of useful minimal separators, that any solution to the {\sc Firebreak} problem will place in $S$ at most a constant bounded number of them, and that we can check each candidate set of separators efficiently.

Given the representation $\mathcal{T}$, which by \cite{HabibStacho} we can construct in time $\bigO(n^3)$, we know from \cite{kumar} that there are at most  $\bigO(n^2)$ minimal vertex separators in our graph $G$, and that they correspond to the vertices of $T$: specifically, there is one for each vertex $u$ of $T$, and it is composed of the vertices of $G$ that are represented by subtrees that contain $u$.  Note that (also due to \cite{kumar}) there are at most  $\bigO(n^2)$ vertices in $T$.  
Any candidate separating set $S$ that will serve as a certificate to an affirmative answer to our problem instance $(G,k,t,v_f)$
must be composed of the union of a set of these minimal separators.



There is a unique path from each leaf to the closest vertex of $\mathcal{T}(v_f)$.  Let $v_i, v_j$ be vertices on that path in the tree, and let $S_i, S_j$ be their corresponding minimal vertex separators in $G$.  Without loss of generality, let $v_i$ be closer to $\mathcal{T}(v_f)$ than $v_j$ is.  Then the set of vertices separated from $v_f$ in $G - S_i$ is at least as large as the set of vertices separated from $v_f$ in $G - (S_i \cup S_j)$.  Thus in a candidate separating set $S$ we need include only at most one minimal separator corresponding to a vertex on the unique path from each leaf to the subtree $\mathcal{T}(v_f)$.  There are at most $\ell$ such paths, so there are ${n^2\choose \ell} = \bigO(n^{2\ell})$ possible combinations of minimal separators to consider when constructing candidate solutions to our problem instance $(G,k,t,v_f)$.

Given a particular candidate separator $S$, we can check if it provides a certificate for an affirmative answer to $(G,k,t,v_f)$ by checking to see if $|S| \leq k$, and if the number of vertices separated from $v_f$ in $G - S$ is at least $t$ in $\bigO(n)$ time.

Thus we can generate all candidate solutions in time $\bigO(n^{2\ell})$, and check the feasibility of each in $\bigO(n)$, giving an overall running time of $\bigO(n^{2\ell+1})$.
\end{proof}

As a special case for leafage $\ell=2$ this argument gives us an algorithm to solve the {\sc Firebreak} problem in interval graphs in time $\bigO (n^{5})$.  We can do somewhat better in this case using a representation-construction algorithm due to Booth and Lueker~\cite{BoothLueker1976}, who give an $\bigO (|V| + |E|)$ algorithm, which, using the reasoning above, we can use to give an $\bigO ((|V| + |E|)^4)$ algorithm.

\subsection{Permutation Graphs}

There are a large variety of types of intersection graphs (in fact, every graph is an intersection graph of some set of objects).
While Theorem~\ref{Thm-Intersection} applies to intersection graphs of paths in a tree (which we note includes interval graphs),
permutation graphs are not a subclass of this class of intersection graphs.
By using a
sweeping-for-separators approach we are able to show that the
{\sc Firebreak} problem is tractable on permutation graphs as well.
We make use of one of the many characterisations of a permutation graph, as in~\cite{Spinrad1985}:  a \emph{permutation graph} is the intersection graph of straight line segments between two parallel lines.

We give an example of a permutation graph and a corresponding representation in Figure~\ref{fig:permutationExample}.   We will make use of several results on these representations to address the {\sc Firebreak} problem on permutation graphs.

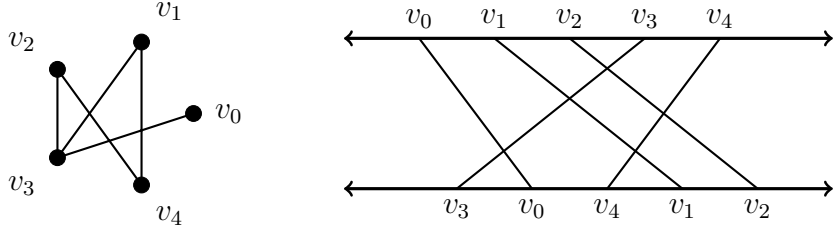
\begin{figure}

\begin{center}
\begin{tikzpicture}[scale=1, auto=center]

\foreach \x/\y in {0/1.5,1/3.5,2/4.5,3/.5,4/2.5}{
\coordinate[label={above:$v_{\x}$}] (v\x) at (\x,2) {};
\coordinate[label={below:$v_{\x}$}] (u\x) at (\y,0) {};
\draw[thick] (v\x)--(u\x);}

\draw[very thick,<->] (-1,0)--(5.5,0);
\draw[very thick,<->] (-1,2)--(5.5,2);

\begin{scope}[xshift=-4cm,yshift=1cm]
\foreach \x in {0,1,2,3,4}
{
\node[label={{\x*72}:$v_{\x}$}] (w\x) at ({\x*72}:1) {};
\coordinate (p\x) at ({\x*72}:1);
\draw [black,fill] ({\x*72}:1) circle [radius=0.105];
}
\end{scope}

\draw[thick] (p0)--(p3)--(p1)--(p4)--(p2)--(p3);

\end{tikzpicture}
\end{center}

\caption{An example of a permutation graph and an associated representation.}
\label{fig:permutationExample}
\end{figure}

\begin{Theorem}
\label{Thm-Permutation}
An instance $(G,k,t,v_f)$ of the {\sc Firebreak} problem where $G$ is a permutation graph on $n$ vertices
can be solved in time $\bigO (n^3 k^2)$.
\end{Theorem}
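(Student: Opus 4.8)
The plan is to exploit the geometry of the permutation model, following the same separator-sweeping philosophy as in Theorem~\ref{Thm-Intersection}. Fix a representation of $G$ by straight segments between two parallel lines, which can be constructed in polynomial time~\cite{Spinrad1985}; label the top endpoints $1,\dots,n$ from left to right, let $\sigma(i)$ be the position of the bottom endpoint of the segment whose top endpoint is $i$, and write $t_f$ and $b_f=\sigma(t_f)$ for the top- and bottom-endpoint positions of $v_f$. Two vertices are adjacent exactly when their segments cross. I would first formalise the notion of a \emph{scanline}: a cut specified by a gap $a$ between consecutive top endpoints and a gap $c$ between consecutive bottom endpoints. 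Such a scanline partitions $V(G)$ into $\mathrm{left}(a,c)=\{i:\ i\le a,\ \sigma(i)\le c\}$, $\mathrm{right}(a,c)=\{i:\ i>a,\ \sigma(i)>c\}$, and the crossing set $\mathrm{cross}(a,c)$ of everything else. The elementary but crucial observation is that no segment of $\mathrm{left}(a,c)$ crosses a segment of $\mathrm{right}(a,c)$, so $\mathrm{cross}(a,c)$ is a vertex separator isolating $\mathrm{left}(a,c)$ from $\mathrm{right}(a,c)$; these are exactly the separators swept out as the scanline moves.

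Next I would prove the structural heart of the argument: some optimal firebreak isolates from $v_f$ precisely a region of the form $\mathrm{left}(a_1,c_1)\cup\mathrm{right}(a_2,c_2)$, where the left scanline lies lower-left of $v_f$ (so $a_1<t_f$ and $c_1<b_f$), the right scanline lies upper-right of $v_f$ (so $a_2\ge t_f$ and $c_2\ge b_f$), and $S=\mathrm{cross}(a_1,c_1)\cup\mathrm{cross}(a_2,c_2)$ padded up to $k$ vertices drawn from the surviving component of $v_f$. The justification is an exchange argument: every vertex of $V(G)\setminus N[v_f]$ lies in the lower-left or the upper-right region cut out by $v_f$'s own segment, so the saved vertices split into a lower-left part and an upper-right part; on each side one may slide the boundary of $v_f$'s component inward to a single scanline without reconnecting any saved vertex or enlarging $S$, exactly as nested separators along a single leaf-to-$v_f$ path were pruned in Theorem~\ref{Thm-Intersection}. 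This reduces the search to one scanline per side.

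With the structure in hand, the algorithm enumerates scanlines. For each of the $\bigO(n^2)$ admissible left scanlines I would compute $|\mathrm{cross}(a_1,c_1)|$ and $|\mathrm{left}(a_1,c_1)|$, and symmetrically on the right, in $\bigO(n)$ time apiece; collapsing these by budget yields, for each $j\in\{0,\dots,k\}$, the largest number of vertices a side can save while contributing at most $j$ vertices to $S$. The one subtlety is that a segment running from the far lower-left to the far upper-right crosses both scanlines, so $|S|$ is not the plain sum of the two crossing sizes; all such doubly-counted segments are neighbours of $v_f$, and correcting for them is what couples the two sides. I would resolve this by fixing the two bottom gaps $c_1,c_2$, of which there are $\bigO(n^2)$ choices, and then sweeping the shared top coordinate while maintaining, over the two separate budgets, the best saved total and updating the count of doubly-crossed neighbours incrementally; this inner computation costs $\bigO(nk^2)$, for an overall running time of $\bigO(n^3k^2)$.

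The main obstacle I anticipate is the combination of the structural lemma and the overlap bookkeeping. Establishing that two scanlines suffice needs a careful exchange argument showing that contracting $v_f$'s component to a scanline-bounded middle region is never disadvantageous for a fixed budget, and the interaction of the two crossing sets through the shared neighbours of $v_f$ is precisely what rules out a naive $\bigO(k^2)$ product of independent sides and forces the extra factors appearing in the stated bound.
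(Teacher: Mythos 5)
Your proposal is correct and follows essentially the same route as the paper's proof: both sweep the two-parallel-lines representation for cut-line (scanline) separators, argue that one such separator on each side of $v_f$ suffices, and pad the resulting union up to size $k$, arriving at the same $\bigO(n^3k^2)$ bound. Your explicit bookkeeping for segments crossed by both scanlines (which are necessarily neighbours of $v_f$) is a point the paper handles only implicitly through $|S\cup T|$, but it does not constitute a different approach.
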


\begin{proof}

As noted by Lemma~\ref{FBP-easy}, the case in which $|N(v_f)|\leq k$ can be solved in $\bigO(n)$ time, so we will now consider the case where $|N(v_f)| > k$. A permutation graph has a \emph{representation} in the form of line segments between two parallel horizontal lines, which itself can be created in $\bigO(n^2)$ time, and in which we can assume without loss of generality that there are no vertical line segments~\cite{Spinrad1985}.

This representation gives a partition of vertices: those corresponding to line segments to the left of the line segment representing $v_f$, those corresponding to line segments to the right of the line segment representing $v_f$, and those adjacent to $v_f$ ({\em i.e.}, the lines that intersect the line representing $v_f$).	 
In this representation there are $n$ line-segment endpoints on the top horizontal line, and $n$ on the bottom (there is one of each for each vertex).  If we segment the horizontal lines into portions \emph{between} line segment endpoints, there are therefore $n-1$ segments of each.
As described in~\cite{BKK1995}, we can find all minimal separators in a permutation graph from the line segment representation by considering a series of cut-lines, where a \emph{cut-line} is a line between the two horizontal walls with one endpoint on the top horizontal line and one on the bottom, and we consider one cut-line for each pair of top segment and bottom segment.  There are thus $\bigO(n^2)$ such cut-lines, and again due to~\cite{BKK1995} we know that every minimal separator in the permutation graph consists of the vertices corresponding to the line segments crossed by one of these cut-lines in the representation.

Of these $\bigO(n^2)$ minimal separators, there are $\bigO(nk)$ that are of interest to us,
namely those that contain at most $(k-1)$ vertices; for convenience we call minimal vertex separators that meet this size constraint \emph{$(k-1)$-small}.
Given any $(k-1)$-small minimal separator $S$ defined by the cut-line $s$, let $S_\Left$ be the subset of vertices to the left of $s$
and let $S_\Right$ be the subset of vertices to the right of $s$.

Consider the following algorithm that searches for firebreaks
({\emph{i.e.}, $k$-subsets of $V(G)$ that separate $v_f$ from some other vertices})
of the form $S \cup T$.

\parbox{18cm}{
\begin{Algorithm}

~

\parbox{15cm}{
\begin{enumerate}
\item
 Find all minimal separators $S$ with $v_f\in S_\Left$ (resp.\ $S_\Right$) which have size at most $k$,
 and denote this set as $\sL$ (resp.\ $\sR$).


\item Exhaustively search for all pairs $(S,T)$ such that $S \in \sL$, $T \in \sR$, $|S|+|T|\leq k$ and $|S_\Right|+|T_\Left|\geq t$.

      If such a pair $(S,T)$ is disjoint and $|S|+|T|=k$, then $S\cup T$ is a firebreak.


      Otherwise,
      consider the component $C$ in $G-(S\cup T)$ containing $v_f$. If $|V(C)|-1+|S_\Right|+|T_\Left|-t+|S\cup T|\geq k$ then there is
      a firebreak as $|V(C)|-1$ vertices can be removed from this component and $|S_\Right|+|T_\Left|-t$ vertices can be removed from other components to produce a firebreak of size $k$ by adding them to $S \cup T$.

\item If no firebreak was found during the exhaustive search, then no firebreak exists.
\end{enumerate}
}

\end{Algorithm}
}

Suppose some firebreak exists but this algorithm found none. There exist $t$ vertices that can be separated from $v_f$, possibly some to the left, say $L$, and some to the right, say $R$, of $v_f$. Note that at most one of $L$ and $R$ can be empty. Since $L$ and $R$ are separated from $v_f$, there must be cut-lines between them and $v_f$ that define minimal separators and hence the algorithm must have found a firebreak.

Since there are $\bigO(nk)$ minimal separators that are $(k-1)$-small, we can search through the pairs in $\bigO(n^2k^2)$ time. For each pair $(S,T)$ we may have to find  $G-(S\cup T)$ and count $|V(C)|$, $|S_\Right|$, and $|T_\Left|$. This adds a factor of $n$ and thus the problem can be solved in $\bigO(n^3k^2)$ time.
%
\end{proof}

\section{Acknowledgements}
Authors Burgess and Pike acknowledge research grant support from NSERC.
Ryan acknowledges support from an NSERC Undergraduate Student Research Award.


\end{document}